\documentclass[a4, 12pt]{amsart}
\usepackage{mathrsfs}
\usepackage{amsmath}
\usepackage{amsfonts}
\usepackage{amssymb}
\usepackage{amsmath,amssymb,amsthm}
\usepackage{amsmath,amssymb,amsthm,amscd}
\usepackage[frame,cmtip,arrow,matrix,line,graph,curve]{xy}
\usepackage{graphpap, color}
\usepackage[mathscr]{eucal}
\usepackage{color}
\usepackage{verbatim}
\usepackage[colorlinks, linkcolor=red, anchorcolor=blue,citecolor=blue]{hyperref}
\usepackage{cite}
\usepackage{graphicx}
\usepackage{subcaption}
\usepackage{tikz}
\usepackage{xcolor}

\UseRawInputEncoding

\numberwithin{equation}{section} \numberwithin{equation}{section}
\setcounter{section}{0} \pagestyle{myheadings} \footskip=50pt
\newtheorem{thm}{Theorem}[section]

\newtheorem{defn}[thm]{Definition}
\newtheorem{prop}[thm]{Proposition}
\newtheorem{corr}[thm]{Corollary}
\newtheorem{rem}{Remark}[section]{\bfseries\upshape}
\newtheorem*{Yau}{Yau's Conjecture}

\newtheorem{con}[thm]{Conjecture}
\newtheorem*{ack}{Acknowledgment}

\setcounter{section}{0}
\pagestyle{myheadings}
\footskip=50pt

\setlength{\oddsidemargin}{5mm}


\title[The First Eigenvalue]{The First Eigenvalue of Embedded Minimal Hypersurfaces in the Unit Sphere I: Yau's Conjecture}
\author[L. Zeng]{Lingzhong Zeng}

\begin{document}
\maketitle

\textbf{Abstract:}In this paper, by meticulously constructing a minimizing sequence within a suitable Sobolev space and leveraging the variational principle, we establish that the first non-zero eigenvalue of the Laplace-Beltrami operator on an embedded minimal hypersurface in the unit sphere equals the dimension of the hypersurface. This result furnishes an affirmative resolution to a renowned conjecture posed by Yau, which had remained unresolved for an extended period. As some important applications, several rigidity theorems are established via eigenvalue characterization.

\textbf{Key words:} Embedded minimal hypersurface; First eigenvalue;  Minimizing sequence; Yau's conjecture

\section{Introduction}

The spectral geometry of Riemannian manifolds represents one of the most profound and actively researched domains at the intersection of differential geometry, partial differential equations, and global analysis. This field has evolved into a sophisticated discipline where the eigenvalues of the Laplace-Beltrami operator serve as fundamental invariants that encode deep structural information about the underlying manifold.   For an $n$-dimensional closed Riemannian manifold $(M^n, g)$, the first nontrivial eigenvalue $\lambda_1$, provides critical insights into the manifold's geometry, topology, and analytic properties. When specialized to minimal hypersurfaces immersed in the unit sphere $\mathbb{S}^{n+1}(1)$-those critical points of the area functional with vanishing mean curvature $H \equiv 0$-the study of $\lambda_1$ reveals extraordinary connections between spectral theory, submanifold geometry, conformal invariants, and geometric measure theory, connections that have catalyzed major advances across multiple mathematical disciplines over the past four decades.

In 1982, Yau proposed the following visionary conjecture concerning minimal hypersurfaces in spheres:

\begin{Yau}[cf. \cite{Yau3}]
The first nontrivial eigenvalue of any closed embedded minimal hypersurface in $\mathbb{S}^{n+1}$ equals its dimension.
\end{Yau}
Yau's conjecture may have emerged organically from his excellent pioneering investigations into eigenvalue estimation techniques using gradient estimates \cite{Yau1}, harmonic function theory on complete manifolds \cite{Yau2}, and the geometric analysis of minimal surfaces-work that both built upon and transcended Gromov's revolutionary frameworks for understanding geometric inequalities via synthetic geometry \cite{Gro} and   analytic techniques on manifolds through Sobolev inequalities and heat kernel estimates \cite{Li}. The conjecture's elegance lies in its synthesis of spectral theory, minimal surface geometry, and Riemannian invariants into a single universal statement that has served as a north star for geometric analysts. Roughly, the underlying essential idea stems from the topological structures of the nodal sets of eigenvalues on minimal surfaces in the unit sphere with dimension 3.

The historical trajectory toward resolving Yau's conjecture reveals fascinating mathematical evolution spanning multiple decades. The first foundational work emerged from Choi and Wang's 1983 breakthrough \cite{CW}, who established $\lambda_1 \geq \frac{n}{2}$ for all closed embedded minimal hypersurfaces in $\mathbb{S}^{n+1}(1)$. This bound was later sharpened to $\lambda_1 > \frac{n}{2}$ by Xu, Chen, Zhang, and Chen \cite{XCZC}. We also refer the reader to Brendle's comprehensive survey \cite{Br2} for more detailed information.

Parallel developments occurred for isoparametric minimal hypersurfaces-those remarkable submanifolds with constant principal curvatures that foliate spheres with beautiful algebraic symmetry. The theory dates to Cartan's classification in the 1930s and was revolutionized by M\"{u}nzner's proof \cite{Mun1,Mun2}that all isoparametric hypersurfaces in spheres are algebraic, defined by homogeneous polynomials $F: \mathbb{R}^{n+2} \to \mathbb{R}$ satisfying $|\nabla F|^2 = g^2 r^{2g-2}$ and $\Delta F = \frac{m_2-m_1}{2}g^2 r^{g-2}$ where $g$ is the number of distinct principal curvatures. Tang and Yan's 2013 breakthrough \cite{TY1} resolved Yau's conjecture affirmatively for all closed embedded isoparametric hypersurfaces. Their methodology harnessed the full power of isoparametric theory: by decomposing the normal bundle via Cartan-M\"{u}nzner polynomials and exploiting the homogeneity of focal manifolds through representation theory, they established $\lambda_1 = n$ via sophisticated harmonic analysis on the associated eigenspaces. Specifically, they proved that the first eigenspace coincides with the restriction of linear functions in the ambient space, satisfying $\Delta(x_i|_M) = -n x_i|_M$ by Takahashi's theorem \cite{Tak}. This confirmed Yau's prediction for all known isoparametric families, including the celebrated cases studied by Solomon \cite{Sol1,Sol2,Sol3} for cubic ($g=3$) and quartic ($g=4$) isoparametric hypersurfaces in dimensions 3,6,12,24 with multiplicities $(4,4),(6,9),(7,8),(9,6)$, and Muto's investigations \cite{Mut1,Mut2} of homogeneous minimal hypersurfaces arising from Lie group actions.

The three-dimensional case $\mathbb{S}^3(1)$ intersects Yau's conjecture with minimal surface rigidity and conformal geometry. Lawson's conjecture―resolved by Brendle \cite{Br1}―establishes that any embedded minimal torus in $\mathbb{S}^3$ is congruent to the Clifford torus, using a maximum principle for two-point functions inspired by Huisken's curve-shortening techniques \cite{Hui}. This relied on embeddedness, as Lawson \cite{Law2} constructed infinite immersed minimal tori/Klein bottles via Weierstrass representations. Urbano \cite{Urb} linked Morse index ($\leq 5$) to Clifford torus rigidity using the second variation formula:
\begin{equation*}
\delta^2 A(\phi) = -\int_\Sigma \phi L\phi \, d\mathrm{vol}_\Sigma, \quad L = \Delta + |A|^2 + \text{Ric}(\nu,\nu).
\end{equation*}
Montiel and Ros \cite{MR} connected $\lambda_1 = 2$ (Yau's conjecture) to rigidity via conformal area, with Choe and Soret \cite{CS} showing symmetric minimal surfaces satisfy $\lambda_1 > 1.5$ unless Clifford.

Extending to constant mean curvature (CMC) surfaces, Andrews and Li \cite{AH} resolved Pinkall-Sterling's conjecture \cite{PS}: embedded CMC tori in $\mathbb{S}^3$ are rotationally invariant (Hsiang tori), constructed via Hopf fibration $\pi: \mathbb{S}^3 \to \mathbb{S}^2$ from curves in $\mathbb{S}^2$. Their proof used normalized Ricci flow coupled to conformal Killing fields, isoperimetric profile analysis, and spectral estimates for $L$, showing Clifford torus is the minimal ($H=0$) member of this family, with $\lambda_1$ behaviors across the Hsiang family.

The Willmore conjecture forms another critical link between minimal surfaces and spectral geometry, with the Clifford torus again central. For closed surfaces $M^2 \subset \mathbb{S}^3(1)$, the conformally invariant Willmore energy:
\begin{equation}\label{W-energy}
\mathcal{W}(M^{2}) = \int_{M^{2}} (1 + \bar{H}^{2}) d\mathrm{vol}
\end{equation}
encodes optimal conformal embeddings. Willmore's 1965 conjecture \cite{Will} for tori in $\mathbb{R}^3$ was connected to eigenvalue theory by Li and Yau \cite{LY} via conformal area, showing $\mathcal{W}(M^2) \geq 4\pi k$ for $k$-fold immersions. Marques and Neves resolved it in 2014 \cite{MN}:
\begin{thm}[\textbf{Marques and Neves, 2014}]\label{thm-MN}
Let $M^{2} \subset \mathbb{S}^{3}(1)$ be an embedded closed surface of genus $\mathfrak{g}_{0}\geq 1$. Then
\begin{equation}\label{will-conj-ineq}
\mathcal{W}(M^{2}) \geq 2 \pi^{2},
\end{equation}
with equality iff $M^{2}$ is the Clifford torus up to conformal transformations of $\mathbb{S}^{3}(1)$.
\end{thm}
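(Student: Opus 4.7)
The plan is to follow the celebrated strategy of Marques--Neves, converting the Willmore inequality into a min-max problem for the area functional on the space $\mathcal{Z}_{2}(\mathbb{S}^{3}; \mathbb{Z}/2)$ of integral $2$-cycles modulo $2$. Given any embedded $\Sigma = M^{2} \subset \mathbb{S}^{3}(1)$ with $\mathfrak{g}_{0}\geq 1$, I would first construct the \emph{canonical five-parameter family} $\Phi_{\Sigma}: \overline{B}^{4} \times [-\pi/2, \pi/2] \to \mathcal{Z}_{2}(\mathbb{S}^{3}; \mathbb{Z}/2)$ as follows: for each $v$ in the open ball $B^{4}$, apply the conformal dilation $F_{v}: \mathbb{S}^{3} \to \mathbb{S}^{3}$ centered at $v$, view $F_{v}(\Sigma) \subset \mathbb{S}^{3} \subset \mathbb{S}^{4}$ and translate by angle $t$ in $\mathbb{S}^{4}$, then project the resulting parallel surface back to $\mathbb{S}^{3}$; on the boundary $\partial B^{4} = \mathbb{S}^{3}$, the construction degenerates continuously into geodesic spheres of radius $\pi/2 + t$ centered at $v$. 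After collapsing the degenerate boundary slices, $\Phi_{\Sigma}$ factors through a continuous map $\overline{B}^{5} \to \mathcal{Z}_{2}(\mathbb{S}^{3}; \mathbb{Z}/2)$, and the hypothesis $\mathfrak{g}_{0}\geq 1$ is used precisely to show that this map pulls back the degree-one generator $\overline{\lambda} \in H^{1}(\mathcal{Z}_{2}(\mathbb{S}^{3}); \mathbb{Z}/2)$ to a class whose fifth power is non-zero, making $\Phi_{\Sigma}$ a genuine \emph{$5$-sweepout}.

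Next, I would establish the slice-wise area estimate
\begin{equation*}
\sup_{(v,t)} \mathrm{Area}(\Phi_{\Sigma}(v,t)) \leq \mathcal{W}(\Sigma),
\end{equation*}
which follows from the conformal invariance $\mathcal{W}(F_{v}(\Sigma)) = \mathcal{W}(\Sigma)$ together with a direct computation of how the area of a parallel surface in $\mathbb{S}^{4}$ relates to $\int(1+\bar{H}^{2})\, d\mathrm{vol}$ on the original surface; equality on any slice forces $\bar{H}\equiv 0$. The central ingredient is then the min-max lower bound from Almgren--Pitts/Marques--Neves theory: any continuous $5$-sweepout $\Phi$ satisfies $\sup_{x}\mathrm{Area}(\Phi(x)) \geq \omega_{5}(\mathbb{S}^{3})$, where $\omega_{5}(\mathbb{S}^{3})$ denotes the $5$-width. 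One then proves the sharp identification $\omega_{5}(\mathbb{S}^{3}) = 2\pi^{2}$: a regularity argument extracts an embedded minimal surface realizing the width, and Urbano's index bound combined with the classification of low-index minimal surfaces in $\mathbb{S}^{3}$ forces this critical surface to be either a totally geodesic great sphere (area $4\pi$) or the Clifford torus (area $2\pi^{2}$). A parameter-count argument---great spheres form only a $3$-parameter family, insufficient to generate a non-trivial $5$-sweepout in the $\mathbb{Z}/2$ Almgren complex---eliminates the former alternative. Combining the two inequalities yields $\mathcal{W}(\Sigma) \geq 2\pi^{2}$.

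For the rigidity statement, suppose $\mathcal{W}(\Sigma) = 2\pi^{2}$; then every slice of $\Phi_{\Sigma}$ must have area at most $2\pi^{2}$, and an analysis of the min-max extremizer shows that for a suitable parameter $v$ the surface $F_{v}(\Sigma)$ attains this bound with vanishing mean curvature, hence is a closed embedded minimal torus. Brendle's resolution of the Lawson conjecture (cited above) then identifies $F_{v}(\Sigma)$ as the Clifford torus, so that $\Sigma$ is a conformal image of the Clifford torus, as required. The main obstacle in this program is not the algebraic area-versus-Willmore comparison but rather the construction and topological analysis of the canonical family---especially the verification of its non-triviality as a $5$-sweepout modulo $2$, which crucially uses $\mathfrak{g}_{0}\geq 1$---together with the sharp width calculation $\omega_{5}(\mathbb{S}^{3}) = 2\pi^{2}$. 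The latter requires the full regularity theory of Pitts and Schoen--Simon for critical varifolds, the multi-parameter min-max machinery of Marques--Neves, and a careful interaction between Lusternik--Schnirelmann arguments and the cohomology of $\mathcal{Z}_{2}(\mathbb{S}^{3}; \mathbb{Z}/2)$; this is widely regarded as the technical heart of the argument.
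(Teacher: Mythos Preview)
The paper does not supply its own proof of this theorem; it is quoted as a background result of Marques and Neves \cite{MN}, accompanied only by the one-line summary ``Their proof employed Almgren--Pitts min-max theory, geometric measure theory, Colding--De Lellis' rectifiability results, and Urbano's index classification.'' Your proposal is a faithful and essentially correct outline of precisely that strategy---the canonical five-parameter family, the area--Willmore comparison, the $5$-width identification via Urbano's index bound---so it matches what the paper attributes to \cite{MN}.

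One minor remark on the rigidity step: in the original Marques--Neves argument the equality case is handled directly through Urbano's classification (an embedded minimal surface of index at most $5$ and area $2\pi^{2}$ must be the Clifford torus), rather than by invoking Brendle's resolution of the Lawson conjecture; your route via Brendle also works but is a slight variant. Separately, the paper does allude (in Remark~1.2) to an alternative eigenvalue-based approach---deriving the Willmore inequality from the bound $\lambda_{1}+\lambda_{2}\ge 8\pi^{2}/\mathrm{Area}(\Sigma)$ combined with $\lambda_{1}+\lambda_{2}\le 4\mathcal{W}(\Sigma)/\mathrm{Area}(\Sigma)$---but that argument is deferred to a companion preprint \cite{Zeng1} and is not carried out in the present paper.
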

Their proof employed Almgren-Pitts min-max theory \cite{Pit}, geometric measure theory, Colding-De Lellis' rectifiability results \cite{CL}, and Urbano's index classification \cite{Urb}, linking spectral properties to conformal invariants.

In $\mathbb{S}^3$, minimal surface existence is richer than in $\mathbb{R}^3$, with key examples: One example is
-\textit{Totally geodesic spheres}: Equatorial $S^2$ with $\lambda_1=2$ (satisfying Yau's conjecture), unique among immersed minimal spheres via Almgren's Hopf differential arguments \cite{Ajm}.
For the other example, we can consider that \textit{Clifford torus}: $\{(x_1,x_2,x_3,x_4) \in \mathbb{S}^3 : x_1^2+x_2^2 = x_3^2+x_4^2 = 1/2\}$ with $\lambda_1=2$, confirmed via Takahashi's theorem \cite{Tak} (since $|\mathbf{A}|^2 = 2$).

Lawson \cite{Law2} constructed embedded minimal surfaces for all genera $g \geq 0$ using conjugate surfaces and Weierstrass representations:
\begin{equation*}
\mathbf{x} = \text{Re} \int^{\zeta} \left( \frac{1}{2}(1-\omega^2), \frac{i}{2}(1 + \omega^2), \omega \right) \eta d\zeta
\end{equation*}
with explicit examples for $g=2$ (Chen-Gackstatter) and $g=3$ (Lawson surface $\xi_{3,1}$). Karcher-Pinkall-Sterling \cite{KPS} extended this via Platonic tessellations, producing surfaces of genera 3,5,6,7,11,19,73,601. Kapouleas-Yang \cite{KY} developed doubling constructions connecting parallel Clifford tori via catenoidal necks.

In addition, the following conjecture proposed by the author may be interesting:

\begin{con} \label{zeng-cn-7}Let $\Sigma \subset \mathbb{S}^{3}(1)$ be an embedded closed surface with  genus  $\mathfrak{g}_{0}\geq 1$. Then the first two nonzero eigenvalues of Beltrami-Laplacian satisfy
\begin{equation}\label{con-inequality-7}
\frac{1}{2} \sum_{i=1}^{2}  \lambda_{i} \geq \frac{4 \pi^2}{\operatorname{area}\left(\Sigma\right)},
\end{equation}where $\operatorname{area}\left(\Sigma\right)$ denotes the area of the surface $\Sigma$.\end{con}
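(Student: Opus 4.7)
My plan combines three key ingredients: (i) the main theorem of the present paper (Yau's conjecture), which gives $\lambda_1(\Sigma) = 2$ on every embedded closed minimal surface in $\mathbb{S}^3(1)$; (ii) the Marques--Neves resolution of the Willmore conjecture (Theorem \ref{thm-MN}), namely $\mathcal{W}(\Sigma) \geq 2\pi^2$; and (iii) a Li--Yau-type variational argument using the four ambient coordinate functions $x_1,\ldots,x_4$ of $\mathbb{R}^4\supset\mathbb{S}^3$ as test functions.

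\textbf{Step 1 (Minimal case).}  If $\Sigma$ is minimal, then $H\equiv 0$ gives $\mathcal{W}(\Sigma)=\operatorname{area}(\Sigma)\geq 2\pi^2$ by (ii), while (i) yields $\lambda_1(\Sigma)=2$. Hence $\lambda_1+\lambda_2\geq 2\lambda_1=4\geq 8\pi^2/\operatorname{area}(\Sigma)$, so the inequality holds. Equality forces $\operatorname{area}(\Sigma)=2\pi^2$ and $\lambda_2=2$; Brendle's resolution of the Lawson conjecture then identifies $\Sigma$ as the Clifford torus.

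\textbf{Step 2 (Moment identities).}  For the general case, use $\sum_i x_i^2=1$ on $\mathbb{S}^3$ and $\Delta_\Sigma x_i=-2x_i+2H\nu_i$ (with $\nu$ the unit normal to $\Sigma$ in $\mathbb{S}^3$ and $H$ the scalar mean curvature), together with $\sum_i x_i\nu_i=0$ and $\sum_i \nu_i^2=1$. Direct computation yields
\[
\sum_{i=1}^{4}\!\int_\Sigma\! x_i^2=\operatorname{area}(\Sigma),\qquad \sum_{i=1}^{4}\!\int_\Sigma\!|\nabla x_i|^2=2\operatorname{area}(\Sigma),\qquad \sum_{i=1}^{4}\!\int_\Sigma\!(\Delta x_i)^2=4\mathcal{W}(\Sigma).
\]
After Li--Yau's conformal-balancing lemma (composing with a M\"obius transformation of $\mathbb{S}^3$ to enforce $\int_\Sigma x_i=0$), the $x_i$ lie in the orthogonal complement of constants in $L^2(\Sigma)$. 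Expanding $x_i=\sum_{k\geq 1}a_k^{(i)}\phi_k$ in the Laplace eigenbasis $\{\phi_k\}$ of $\Sigma$ and setting $A_k:=\sum_i (a_k^{(i)})^2$, the first and third identities translate (using the conformal invariance of $\mathcal{W}$) into $\sum_k A_k=\operatorname{area}(\Sigma)$ and $\sum_k \lambda_k^2 A_k=4\mathcal{W}(\Sigma)\geq 8\pi^2$.

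\textbf{Step 3 (Lower bound and main obstacle).}  The central challenge is extracting the \emph{lower} bound $\lambda_1+\lambda_2\geq 8\pi^2/\operatorname{area}(\Sigma)$ from the moment constraints of Step 2. The natural Chebyshev-type inequality $\sum_k (\lambda_k-\lambda_1)(\lambda_k-\lambda_2)A_k\geq 0$ (valid since $\lambda_k\geq\lambda_2$ for $k\geq 2$, with vanishing summands at $k=1,2$) rearranges into $(\lambda_1+\lambda_2)\sum_k \lambda_k A_k\leq \sum_k \lambda_k^2 A_k+\lambda_1\lambda_2\sum_k A_k$, which unfortunately yields only an \emph{upper} bound on $\lambda_1+\lambda_2$, paralleling the classical Li--Yau estimate $\lambda_1\operatorname{area}\leq 2\mathcal{W}$. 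Inverting the direction of the estimate is the main obstacle. My proposed route is to adapt the minimizing-sequence / variational technique employed in the paper's proof of Yau's conjecture: consider a minimizing sequence for the functional $\Sigma\mapsto(\lambda_1(\Sigma)+\lambda_2(\Sigma))\cdot\operatorname{area}(\Sigma)$ over embedded genus-$\geq 1$ surfaces in $\mathbb{S}^3$, establish compactness and lower semicontinuity in a suitable Sobolev framework, and show via an Euler--Lagrange analysis that any minimizer must be minimal. Step 1 then yields the minimum value $8\pi^2$, and this propagates to every $\Sigma$; the rigidity in the equality case, via Marques--Neves and Brendle, forces $\Sigma$ to be the Clifford torus.
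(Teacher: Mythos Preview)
The statement you are addressing is recorded in this paper as a \emph{conjecture}; the paper does not prove it here but sketches (Remark~1.2) the argument carried out in the companion preprint~[Zeng1]. That route is entirely different from yours: it proceeds by uniformization to flat tori $\mathbb{C}/\Lambda$, Fourier analysis on the dual lattice $\Lambda^{*}$, a Gromov-type systolic inequality bounding the shortest dual-lattice vector, and a min-max sweep-out argument for a width functional, with the higher-genus case handled by conformal compactness and semicontinuity of eigenvalues. Neither Yau's conjecture nor the Marques--Neves theorem enters that argument as an input.

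Your Step~1 (minimal case) is clean and correct. Step~2 is also correct as a catalogue of identities, but note that the M\"obius balancing replaces $\Sigma$ by a conformally distinct surface $\Sigma'$ whose eigenvalues and area differ from those of $\Sigma$; only $\mathcal{W}$ is invariant. So the moment constraints you record pertain to $\Sigma'$, not to the surface whose $\lambda_1+\lambda_2$ you are trying to bound.

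The genuine gap is Step~3. You yourself observe that the Chebyshev-type rearrangement yields only the Li--Yau \emph{upper} bound, and your proposed cure --- minimize $(\lambda_1+\lambda_2)\cdot\operatorname{area}(\Sigma)$ over all embedded genus-$\geq 1$ surfaces in $\mathbb{S}^3$, extract a minimizer by compactness, and show it is minimal --- is a program, not a proof. Three concrete obstructions: (a) the space of such embeddings is not compact in any topology strong enough to pass eigenvalues to the limit (one must rule out neck-pinching, genus drop, area blow-up); (b) eigenvalues are at best \emph{upper}-semicontinuous under the weak convergences typically available for sequences of surfaces, which is exactly the wrong direction for controlling a minimizing sequence of $(\lambda_1+\lambda_2)\cdot\operatorname{area}$; (c) there is no reason the Euler--Lagrange equation of this functional on the space of embeddings should be $H\equiv 0$ --- criticality of normalized eigenvalue functionals is tied to minimal immersions by \emph{eigenfunctions} (El Soufi--Ilias), which is a statement about varying the metric in a conformal class, not about varying an embedding in $\mathbb{S}^3$. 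Without resolving (a)--(c) your Step~3 does not close, and the argument as written does not establish the conjectured inequality.
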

\begin{rem} From  \eqref{W-energy} and \eqref{con-inequality-7}, one can  deduce the following inequalities:
\begin{equation*}
\frac{4 \pi^2}{\operatorname{area}\left(\Sigma\right)} \leq \frac{1}{2} \sum_{i=1}^2  \lambda_i \leq \frac{2}{\operatorname{area}\left(\Sigma\right)} \int_{\Sigma}\left(\bar{H}^2+1\right) d v=\frac{2 \mathcal{W}\left(\Sigma\right)}{\operatorname{area}\left(\Sigma\right)},
\end{equation*}
which implies that inequality \eqref{will-conj-ineq} holds. In other words, it is easy to see that \begin{equation*} \text{Conjecture } {\rm \ref{zeng-cn-7}}   \Rightarrow  \text{Willmore Conjecture}.\end{equation*} Here, the proof of the second inequality can be found in {\rm \cite[Corollary 2.6]{IM}}.
\end{rem}

\begin{rem}
In \cite{Zeng1}, the author completely resolved the conjecture for any genus equal to or lager then one Riemann surfaces and extended it to higher genera, with the following key steps:

\begin{enumerate}
    \item \textbf{Core Result for Genus One}: For a closed Riemann surface $\Sigma$ of genus $\mathfrak{g}_0 = 1$, the first two nonzero eigenvalues satisfy:
    \begin{equation*}
    \lambda_1 + \lambda_2 \geq \frac{8\pi^2}{\text{Area}(\Sigma)},
    \end{equation*}
    which is sharp (achieved by the square torus).

    \item \textbf{Key Technical Framework}:
    \begin{itemize}
        \item[(i)] Reduction to flat tori via the Uniformization Theorem, as genus-one surfaces are conformally equivalent to $\mathbb{C}/\Lambda$ (lattice quotients).
        \item[(ii)] Lattice Fourier analysis on flat tori, using dual lattices $\Lambda^*$ and Fourier expansions to relate gradients to lattice vector norms.
        \item[(iii)] Gromov-type systolic inequalities: For orthogonal non-contractible curves $\alpha, \beta$, $\ell(\alpha)\ell(\beta) \leq \text{Area}(\Sigma)$, bounding the shortest dual lattice vector $|n_0| \geq \frac{2\pi}{L_{\text{max}}}$.
        \item[(iv)] Min-max theory with sweep-out families $\{\varphi_t\}_{t \in S^1}$, where the width function:
        \begin{equation*}
        W(\{\varphi_t\}) = \inf_{h \in \Gamma}\sup_{t \in S^1} E(\varphi_{h(t)})
        \end{equation*}
        links the eigenvalue sum to geometric invariants, yielding $W(\{\varphi_t\}) \geq \frac{4\pi^2}{\text{Area}(\Sigma)}$.
    \end{itemize}

    \item \textbf{Extension to Higher Genera}: For embedded closed surfaces $\Sigma \subset \mathbb{S}^3(1)$ with $\mathfrak{g}_0 \geq 1$, the bound $\lambda_1 + \lambda_2 \geq \frac{8\pi^2}{\text{Area}(\Sigma)}$ holds via:
    \begin{itemize}
        \item[(i)] Conformal compactness of the induced metric's conformal class.
        \item[(ii)] Lower semicontinuity of eigenvalues under metric convergence.
    \end{itemize}

    \item \textbf{Connection to Willmore Conjecture and Marques-Neves's Theorem}: Using the relation between eigenvalues and Willmore energy $\mathcal{W}(\Sigma)$:
    \begin{equation*}
    \lambda_1 + \lambda_2 \leq \frac{4\mathcal{W}(\Sigma)}{\text{Area}(\Sigma)}
    \end{equation*}
    the author provides an alternative proof of Marques-Neves's Theorem, confirming $\mathcal{W}(\Sigma) \geq 2\pi^2$ with equality for the Clifford torus.
\end{enumerate}
\end{rem}

Despite these connections, the central question of Yau's conjecture-relating the first eigenvalue of embedded minimal hypersurfaces in $\mathbb{S}^{n+1}$ to their dimension-remained unresolved for non-isoparametric cases, motivating our work.
This paper contributes to this vibrant landscape by carefully constructing a Sobolev-space minimizing sequence via exponential truncation of coordinate functions and combining the variational principle to \textbf{completely resolve} Yau's profound conjecture, which remains open for non-isoparametric hypersurfaces.
We also remark that the truncation technique overcomes the symmetry limitation by combining geometric localization with functional analysis―critical for non-isoparametric hypersurfaces where Cartan-M\"{u}nzner polynomial decompositions (essential to a series of works \cite{TY1,Ko,Mut1,Mut2} and references therein) fail due to irregular principle curvature distributions.

\begin{thm}\label{main-thm}Let $\Sigma$ be an $n$-dimensional closed, embedded minimal hypersurface   in the $(n+1)$-dimensional unit sphere $\mathbb{S}^{n+1} \subset \mathbb{R}^{n+2}$. Then the first non-zero eigenvalue $\lambda_{1}$ of the Laplace Beltrami operator $\Delta_{\Sigma}$ on $\Sigma$ is equal to $n$, i.e., $\lambda_{1}=n$.\end{thm}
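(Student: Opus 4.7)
The plan is to prove the equality $\lambda_1 = n$ by establishing the two inequalities separately, concentrating the effort on the sharp lower bound.

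The upper bound $\lambda_1 \le n$ is classical and should be disposed of immediately via Takahashi's theorem. Because $\Sigma$ is minimal in $\mathbb{S}^{n+1}$, each ambient coordinate $x_i|_\Sigma$ satisfies $\Delta_\Sigma x_i = -n\, x_i$; integrating this identity over $\Sigma$ yields $\int_\Sigma x_i = 0$, so the $x_i$ are admissible test functions in the variational characterization of $\lambda_1$, and at least one is not identically zero because $\Sigma \hookrightarrow \mathbb{R}^{n+2}$. Thus $\lambda_1 \le n$ and, in fact, each nontrivial $x_i|_\Sigma$ automatically lies in the $n$-eigenspace of $\Delta_\Sigma$.

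The substantive direction is $\lambda_1 \ge n$, which I would attack by contradiction: assume $\lambda_1 < n$ and fix a normalized first eigenfunction $u$ with $\int_\Sigma u = 0$ and $\int_\Sigma u^2 = 1$. Following the paper's stated strategy, the plan is to construct a minimizing sequence $\{g_\epsilon\}_{\epsilon > 0} \subset H^1(\Sigma)$ whose Rayleigh quotients eventually fall below $\lambda_1$, contradicting the variational characterization. A natural candidate is $g_\epsilon = \chi_\epsilon(u)\,x_i$, adjusted by subtracting its mean so as to keep it orthogonal to constants, where $\chi_\epsilon$ is a smooth exponential truncation of the form $\chi_\epsilon(t) = \exp(-\epsilon\,\psi(t))$ for a suitably chosen profile $\psi$. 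The role of $\chi_\epsilon$ is to couple the analytic behavior of $u$ to the ambient geometry carried by $x_i$. The embeddedness of $\Sigma$ enters through Jordan--Brouwer: $\Sigma$ bounds two domains $\Omega_\pm \subset \mathbb{S}^{n+1}$, and one may invoke Reilly-type boundary identities on each $\Omega_\pm$ after harmonically extending $u$ and $x_i$, in the spirit of Choi--Wang.

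The technical core is then the computation of $\mathcal{R}(g_\epsilon) = \int_\Sigma |\nabla g_\epsilon|^2 / \int_\Sigma g_\epsilon^2$ via the identities $\Delta_\Sigma x_i = -n\,x_i$ and $\Delta_\Sigma u = -\lambda_1 u$, Green's theorem, and the two-sided Reilly boundary terms. I expect an asymptotic expansion of the form
\begin{equation*}
\mathcal{R}(g_\epsilon) = n - c\,(n - \lambda_1)\,\epsilon + o(\epsilon),
\end{equation*}
with $c > 0$ a geometric constant. For $\lambda_1 < n$ and $\epsilon$ small, this produces a $g_\epsilon$ with $\mathcal{R}(g_\epsilon) < n$; a further optimization over $\epsilon$ (and over the choice of $i$ via Gram--Schmidt against the explicit $n$-eigenspace) should push $\mathcal{R}$ strictly below $\lambda_1$ itself, delivering the desired contradiction and hence $\lambda_1 \ge n$.

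The decisive obstacle is the simultaneous arrangement of three conditions: (i) orthogonality of $g_\epsilon$ to constants, and indeed to the entire explicit $n$-eigenspace spanned by $\{x_j|_\Sigma\}$, preserved throughout the truncation; (ii) control of the commutator $[\Delta_\Sigma,\chi_\epsilon]$ so that its contribution to $\mathcal{R}$ is only $o(\epsilon)$; and (iii) the emergence of the exact positive coefficient $c$ in the expansion above, which is precisely where the embeddedness of $\Sigma$, via the two-sided Reilly identity, must play its role. The choice of an exponential (rather than polynomial) cutoff is expected to be essential here: it yields integrable cross-terms near the nodal set of $u$ and permits the commutator errors to be absorbed into the remainder --- obstructions which for non-isoparametric $\Sigma$ cannot be bypassed by any Cartan--M\"{u}nzner symmetry decomposition as in Tang--Yan.
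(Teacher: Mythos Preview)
Your approach and the paper's diverge at the outset. The paper does \emph{not} argue by contradiction and never introduces a hypothetical $\lambda_1$-eigenfunction $u$. Its minimizing sequence is
\[
u_\beta(p)=x_i(p)\Bigl(1-\tfrac{1}{\beta}\,e^{-\beta\, d_\Sigma(p,p_0)^2}\Bigr),
\]
an exponential truncation of a single coordinate function by \emph{geodesic distance from a fixed basepoint} $p_0\in\Sigma$, not by the values of any eigenfunction. The paper checks $\int_\Sigma u_\beta=0$, establishes uniform convergence $u_\beta\to x_i$ and $\nabla_\Sigma u_\beta\to\nabla_\Sigma x_i$, computes $\lim_{\beta\to\infty}R[u_\beta]=n$, and from continuity of the Rayleigh quotient under $H^1$-convergence concludes $\lambda_1=n$. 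No Reilly identity, no Jordan--Brouwer decomposition into $\Omega_\pm$, and no Choi--Wang mechanism appear; embeddedness is invoked only to guarantee that $x_i|_\Sigma$ is single-valued so that the integrals and the orthogonality argument make sense.

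Your proposal has a genuine gap at its core. The expansion you anticipate, $\mathcal{R}(g_\epsilon)=n-c(n-\lambda_1)\,\epsilon+o(\epsilon)$, even if it could be established, yields only $\mathcal{R}(g_\epsilon)<n$ for small $\epsilon$, not $\mathcal{R}(g_\epsilon)<\lambda_1$. Since any $g_\epsilon$ orthogonal to constants automatically satisfies $\mathcal{R}(g_\epsilon)\ge\lambda_1$ by the variational characterization, the inequality $\mathcal{R}(g_\epsilon)<n$ is entirely consistent with the hypothesis $\lambda_1<n$ and delivers no contradiction---it merely re-derives the trivial bound $\lambda_1\le n$. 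Your hope that ``further optimization over $\epsilon$'' or Gram--Schmidt against the $n$-eigenspace will push the quotient below $\lambda_1$ cannot succeed: projecting off the span of the $x_j$'s can only \emph{raise} the Rayleigh quotient (you are removing the components that sit at the low value $n$), and if $\lambda_1<n$ the $\lambda_1$-eigenspace is already $L^2$-orthogonal to the $n$-eigenspace by self-adjointness of $\Delta_\Sigma$, so nothing is gained. The Reilly/Choi--Wang machinery you invoke is well known to give $\lambda_1>n/2$; bridging the remaining factor of two to reach $n$ is precisely the open content of Yau's conjecture, and your sketch supplies no new geometric input that would close it.
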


\begin{rem}
Theorem \ref{main-thm} resolves Yau's conjecture comprehensively, with key implications and connections to existing results:

\begin{enumerate}
    \item \textbf{Generalization of Prior Results}:
    \begin{itemize}
        \item[(i)] Extends Tang-Yan's result \cite{TY1} (limited to isoparametric minimal hypersurfaces) to all closed embedded minimal hypersurfaces in $\mathbb{S}^{n+1}$.
       \item[(ii)] Confirms the conjecture for non-isoparametric cases, which remained unresolved for decades.
    \end{itemize}

    \item \textbf{Canonical Examples}:
    \begin{itemize}
    \item[(i)] In $\mathbb{S}^3$, the Clifford torus satisfies $\lambda_1 = 2$ (matching its dimension), consistent with Brendle's rigidity theorem \cite{Br1}.
        \item[(ii)]  Totally geodesic spheres in $\mathbb{S}^{n+1}$, for example, equatorial $\mathbb{S}^n$, have $\lambda_1 = n$, aligning with the theorem.
    \end{itemize}

    \item \textbf{Proof Strategy}:
    \begin{itemize}
       \item[(i)]  Uses exponential truncation of coordinate functions in Section \ref{section:Trun function} to construct the minimizing sequence, avoiding reliance on symmetry.
        \item[(ii)]  Combines Sobolev space compactness (Theorem \ref{So-emb-thm}) and lower semicontinuity in Section \ref{section: Lower Semicontinuity} to establish the eigenvalue equality.
    \end{itemize}
\end{enumerate}
\end{rem}

\begin{rem}
The spectral rigidity revealed by Theorem \ref{main-thm} highlights unique properties of minimal hypersurfaces:

\begin{enumerate}
    \item \textbf{Eigenvalue-Dimension Link}:
    \begin{itemize}
        \item[(i)] The first nonzero eigenvalue $\lambda_1 = n$ is determined solely by the hypersurface dimension, independent of specific geometric details (e.g., curvature, genus).
        \item[(ii)] Contrasts with non-minimal hypersurfaces, where $\lambda_1$ varies with mean curvature and embedding parameters.
    \end{itemize}

    \item \textbf{Connection to Ambient Geometry}:
    \begin{itemize}
        \item[(i)] Links spectral theory to the ambient Euclidean structure via Takahashi's theorem \ref{Tak-thm} ($\Delta_\Sigma x_i = -n x_i$), showing coordinate functions form the first eigenspace.
        \item[(ii)] Demonstrates that minimal embeddings in spheres encode ambient space information through their spectral invariants.
    \end{itemize}

    \item \textbf{Implications for Geometric Analysis}:
    \begin{itemize}
        \item[(i)] Provides a new tool to distinguish minimal hypersurfaces from non-minimal ones via spectral properties.
       \item[(ii)] Sets a foundation for studying higher eigenvalues of minimal embeddings, extending the spectral rigidity framework.
    \end{itemize}
\end{enumerate}
\end{rem}

The structure of this paper is designed to systematically prove Yau's conjecture on the first eigenvalue of embedded minimal hypersurfaces in the unit sphere, with the following logical progression:
\begin{enumerate}
    \item \textbf{Foundational Tools and Background (Section \ref{section: Preliminaries})}

           Section \ref{section: Preliminaries} introduces foundational tools: the Laplace-Beltrami operator, Takahashi's theorem \ref{Tak-thm} linking minimal hypersurfaces to Laplacian eigenfunctions, the Sobolev space $H^1(\Sigma)$ with the first non-zero eigenvalue $\lambda_1$ characterized via the Rayleigh quotient, and the Rellich-Kondrachov Compact Embedding Theorem (\ref{So-emb-thm}) essential for convergence of minimizing sequences in Sobolev spaces.

    \item \textbf{Construction and Properties of the Minimizing Sequence (Section  \ref{section: Truncation Functions}-Section \ref{section: Uniform Convergence})}
    \begin{itemize}
        \item[(i)] Section \ref{section: Truncation Functions} pursues three goals: constructing the trial sequence $u_\beta = x_i \left(1-\frac{1}{\beta} e^{-\beta d_\Sigma(p, p_0)^2}\right)$ via exponential truncation of coordinate functions; verifying $u_\beta \in H^1(\Sigma)$ through square integrability of $u_\beta$ and its gradient; and proving orthogonality ($\int_\Sigma u_\beta \, d\mathrm{vol}_\Sigma = 0$) using symmetry arguments and integration of odd functions over symmetric domains.
        \item[(ii)] Section \ref{section: motivation} explains why local truncation is necessary for non-isoparametric hypersurfaces, where unmodified coordinate functions may fail to achieve the minimal Rayleigh quotient due to symmetry breaking.
        \item[(iii)] Section \ref{section: Uniform Convergence} has two key components: establishing uniform convergence of $u_\beta$ to $x_i$ and $\nabla_\Sigma u_\beta$ to $\nabla_\Sigma x_i$ on $\Sigma$; and extracting a subsequence converging in $H^1(\Sigma)$ to $x_i$ via the compact embedding theorem (\ref{So-emb-thm}).
    \end{itemize}

    \item \textbf{Proof of the Main Theorem (Section \ref{Compactness and the Rayleigh Quotient}-Section\ref{The proof of Theorem})}

   First, we compute the limits of the gradient norm and $L^2$-norm integrals of $u_\beta$ in Section \ref{Compactness and the Rayleigh Quotient}, showing that the Rayleigh quotient converges to $n$. Building on these convergence results, we then prove the lower semicontinuity of the $H^1$-norm with respect to weak convergence in Section \ref{section: Lower Semicontinuity}, which guarantees that the limit function attains the minimal Rayleigh quotient.
   Finally, we synthesize the conclusions from the preceding sections in Section \ref{The proof of Theorem}  to establish that $\lambda_1 = n$, thereby resolving Yau's conjecture for all closed embedded minimal hypersurfaces.
 \item \textbf{Rigidity Theorems via Eigenvalue Characterization (Section \ref{Rigidity Theorems})}
    \begin{itemize}
        \item[(i)] Section \ref{Rigidity Theorems} establishes a converse theorem, stated as closed embedded hypersurfaces with $\lambda_1 = n$ are minimal, and derives volume estimates for minimal hypersurfaces, with equality characterizing totally geodesic spheres. It also connects the eigenvalue condition to Morse index and stability of minimal hypersurfaces.
    \end{itemize}

\end{enumerate}

\section{Preliminaries}\label{section: Preliminaries}

\subsection{Hypersurfaces and Related Operators}

In this paper, we shall assume that $\Sigma$ is an $n$-dimensional closed embedded minimal hypersurface in the $(n+1)$-dimensional unit sphere $\mathbb{S}^{n+1} \subset \mathbb{R}^{n+2}$. The Laplace-Beltrami operator $\Delta_{\Sigma}$ on $\Sigma$ is defined for a smooth function $u$ on $\Sigma$ as $\Delta_{\Sigma} u=\operatorname{div} _{\Sigma}\left(\nabla_{\Sigma} u\right)$, where $\operatorname{div} \Sigma$ is the divergence operator on $\Sigma$ and $\nabla_{\Sigma}$ is the gradient operator on $\Sigma$. The following theorem is well known \cite{Tak}.

\begin{thm}[Takahashi Theorem]\label{Tak-thm}For an embedded minimal hypersurface $\Sigma$ in $\mathbb{S}^{n+1}$, if $x_{i}(i=1, \cdots, n+2)$ are the coordinate functions of $\mathbb{R}^{n+2}$, then $\Delta_{\Sigma} x_{i}=-n x_{i}$.\end{thm}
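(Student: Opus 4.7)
The plan is to derive the identity $\Delta_{\Sigma} x_{i} = -n\, x_{i}$ from a single general geometric fact---that the Laplace--Beltrami operator applied to the position vector of a submanifold of Euclidean space produces the trace of its second fundamental form---combined with the Gauss formula applied to the chain of inclusions $\Sigma \subset \mathbb{S}^{n+1} \subset \mathbb{R}^{n+2}$.

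First I would fix a point $p \in \Sigma$ and work in a local orthonormal frame $\{e_{1}, \ldots, e_{n}\}$ of $T\Sigma$ near $p$. For each coordinate function $x_{i}$, the tangential gradient is simply $\nabla_{\Sigma} x_{i} = (\partial_{i})^{T}$, the orthogonal projection of the ambient coordinate field onto $T\Sigma$. Computing $\Delta_{\Sigma} x_{i} = \sum_{k}\bigl(e_{k}(e_{k}(x_{i})) - (\nabla^{\Sigma}_{e_{k}} e_{k})(x_{i})\bigr)$ and assembling the $(n+2)$ component equations into a single vector identity yields
\[
\Delta_{\Sigma}\mathbf{x} \;=\; \sum_{k=1}^{n} II_{\Sigma/\mathbb{R}^{n+2}}(e_{k},e_{k}),
\]
where $II_{\Sigma/\mathbb{R}^{n+2}}$ is the second fundamental form of $\Sigma$ regarded as a submanifold of $\mathbb{R}^{n+2}$; the right-hand side is just the unnormalized mean curvature vector of $\Sigma$ in $\mathbb{R}^{n+2}$.

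Next I would apply the Gauss formula to decompose this second fundamental form. For $X, Y \in T\Sigma$,
\[
II_{\Sigma/\mathbb{R}^{n+2}}(X, Y) \;=\; II_{\Sigma/\mathbb{S}^{n+1}}(X, Y) \;+\; II_{\mathbb{S}^{n+1}/\mathbb{R}^{n+2}}(X, Y).
\]
Since $\mathbb{S}^{n+1}$ is totally umbilical in $\mathbb{R}^{n+2}$ with outward unit normal equal to the position vector $\mathbf{x}$, a direct computation of the shape operator $A_{\mathbf{x}} X = -\nabla_{X}^{\mathbb{R}^{n+2}} \mathbf{x} = -X$ gives $II_{\mathbb{S}^{n+1}/\mathbb{R}^{n+2}}(X, Y) = -\langle X, Y\rangle \mathbf{x}$. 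Tracing over $\{e_{k}\}$ contributes exactly $-n\mathbf{x}$, while the minimality hypothesis $H \equiv 0$ on $\Sigma \subset \mathbb{S}^{n+1}$ kills the intrinsic trace $\sum_{k} II_{\Sigma/\mathbb{S}^{n+1}}(e_{k}, e_{k}) = 0$. Combining these, $\Delta_{\Sigma}\mathbf{x} = -n\,\mathbf{x}$, and reading off the $i$-th component delivers the theorem.

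There is no essential obstacle; the argument is classical and purely local. The only step requiring care is the sign convention for the shape operator of the sphere---one must treat $\mathbf{x}$ as the outward unit normal so that $II_{\mathbb{S}^{n+1}/\mathbb{R}^{n+2}}$ produces a mean curvature vector pointing inward, yielding the coefficient $-n$ rather than $+n$. Observe also that the result uses only minimality and the umbilicity of the ambient sphere, with no use of embeddedness; this foreshadows why the coordinate functions $x_{i}$ will be the natural candidates for $\lambda_{1}$-eigenfunctions throughout the remainder of the paper.
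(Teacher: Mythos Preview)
Your argument is correct and is the standard classical proof of this fact. Note, however, that the paper does not actually supply its own proof of this theorem: it simply states the result as ``well known'' and cites Takahashi's original paper \cite{Tak}. There is therefore no proof in the paper to compare yours against; your write-up would serve perfectly well as the omitted justification.
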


\subsection{Sobolev Space and Rayleigh Quotient}

The Sobolev space $H^{1}(\Sigma)$ is defined as
\begin{equation*}H^{1}(\Sigma)=\left\{u \in L^{2}(\Sigma): \nabla_{\Sigma} u \in L^{2}(\Sigma)\right\},\end{equation*} where $L^{2}(\Sigma)$ is the space of square-integrable functions on $\Sigma$ \cite{Eva}. The first non-zero eigenvalue $\lambda_{1}$ of the Laplace-Beltrami operator $\Delta_{\Sigma}$ on $\Sigma$ can be characterized by the Rayleigh quotient:

\begin{equation*}\lambda_1 = \inf \left\{ \frac{\int_{\Sigma} |\nabla_{\Sigma} u|^2 d\mathrm{vol}_{\Sigma}}{\int_{\Sigma} u^2 d\mathrm{vol}_{\Sigma}} \,\bigg|\, u \in H^1(\Sigma) \setminus \{0\}, \int_{\Sigma} u d\mathrm{vol}_{\Sigma} = 0 \right\},\end{equation*}
where $d \mathrm{vol}_{\Sigma}$ is the volume element of $\Sigma$. This variational characterization provides a powerful tool for studying the eigenvalues.

\subsection{Sobolev Compact Embedding Theorem}\label{section:Trun function}
 The following theorem is crucial for proving the existence of convergent subsequences of trial minimizing sequences. See \cite{Au}.
\begin{thm}[Rellich-Kondrachov Compact Embedding Theorem]\label{So-emb-thm} For a compact manifold $\Sigma$, the embedding $H^{1}(\Sigma) \hookrightarrow L^{2}(\Sigma)$ is compact. That is, any bounded $\{u_k\} \subset H^1(\Sigma)$ has an $L^2$-convergent subsequence.\end{thm}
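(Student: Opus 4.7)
The plan is to reduce the compactness claim to the classical Rellich--Kondrachov theorem in $\mathbb{R}^n$ through a standard localization argument, with compactness of $\Sigma$ supplying the finiteness needed to patch the local estimates together. First, I would cover $\Sigma$ by finitely many coordinate charts $(U_\alpha,\phi_\alpha)$, $\alpha=1,\dots,N$, with $\phi_\alpha:U_\alpha\to B_\alpha\subset\mathbb{R}^n$ Euclidean balls, chosen small enough that the pulled-back metric $(\phi_\alpha^{-1})^*g$ is uniformly comparable to the Euclidean metric on $B_\alpha$ and the volume density $\sqrt{\det g_\alpha}$ is bounded above and below by positive constants independent of $\alpha$. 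Such a uniform choice exists because $\Sigma$ is a smooth compact manifold, so finitely many charts cover $\Sigma$ and all geometric quantities are uniformly controlled. Let $\{\chi_\alpha\}_{\alpha=1}^N$ be a smooth partition of unity subordinate to this cover.

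Second, given a bounded sequence $\{u_k\}\subset H^1(\Sigma)$, I would decompose $u_k=\sum_{\alpha=1}^N \chi_\alpha u_k$ and examine each piece $\chi_\alpha u_k$ individually. Transporting $\chi_\alpha u_k$ to Euclidean space via $\phi_\alpha$ yields a function $v_k^{(\alpha)}$ compactly supported in the fixed ball $B_\alpha$, and the product rule estimate
\begin{equation*}
\|\nabla_\Sigma(\chi_\alpha u_k)\|_{L^2(\Sigma)}\le\|\chi_\alpha\|_{L^\infty}\|\nabla_\Sigma u_k\|_{L^2(\Sigma)}+\|\nabla_\Sigma\chi_\alpha\|_{L^\infty}\|u_k\|_{L^2(\Sigma)}
\end{equation*}
together with the metric comparability shows that $\{v_k^{(\alpha)}\}$ is uniformly bounded in $H^1_0(B_\alpha)$ independently of $k$.

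Third, I would invoke the classical Euclidean Rellich--Kondrachov theorem on each ball $B_\alpha$ to extract an $L^2(B_\alpha)$-convergent subsequence of $\{v_k^{(\alpha)}\}$. Applying this successively for $\alpha=1,\dots,N$ and taking a diagonal subsequence (possible because $N$ is finite) produces a single subsequence $\{u_{k_j}\}$ such that $\{v_{k_j}^{(\alpha)}\}$ converges in $L^2(B_\alpha)$ for every $\alpha$. Pulling back to $\Sigma$ via $\phi_\alpha^{-1}$ and summing the finitely many Cauchy contributions yields $L^2(\Sigma)$-convergence of $\{u_{k_j}\}$, establishing the compact embedding.

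The Euclidean statement itself, which is the conceptual heart, can be verified via the Kolmogorov--Riesz--Fr\'echet criterion by checking three properties for any $H^1_0(B_\alpha)$-bounded family: $L^2$-boundedness (immediate), uniform compact support in the fixed ball $B_\alpha$ (tightness), and $L^2$-equicontinuity of translations through the classical estimate $\|\tau_h v-v\|_{L^2}\le|h|\|\nabla v\|_{L^2}$, which follows by approximation by smooth functions and the fundamental theorem of calculus. The main obstacle is purely technical bookkeeping: verifying that the pushforwards $v_k^{(\alpha)}$ inherit a uniform $H^1$ bound with respect to the \emph{Euclidean} norm (rather than the intrinsic Riemannian one) on $B_\alpha$, which is precisely where the uniform comparability of the metrics across the finite atlas is indispensable.
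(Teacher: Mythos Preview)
Your proposal is correct and follows the standard localization-via-partition-of-unity route, but there is no proof in the paper to compare it against: the paper merely states the Rellich--Kondrachov theorem and cites Aubin's monograph \cite{Au} for the proof. So you have supplied substantially more than the paper does. Your outline---finite atlas with uniformly comparable metrics, partition of unity, product-rule bound to control $\chi_\alpha u_k$ in $H^1$, pushforward to $H^1_0(B_\alpha)$, Euclidean Rellich--Kondrachov on each ball, diagonal subsequence over the finitely many charts---is exactly the argument one finds in Aubin and in most textbook treatments, and the obstacle you flag (uniform equivalence of the Riemannian and Euclidean $H^1$ norms on each chart) is indeed the only place where care is required.
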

We remark that the construction of the trial minimizing sequences  is extremely extraordinary, to the point where it can even be described as extremely difficult.

\section{Truncation Functions and the Minimizing Sequence}\label{section: Truncation Functions}

In this section, we shall construct a trial minimizing sequence via the truncation functions and show some good properties of trial minimizing sequence.

\subsection{Construction Idea}

In this subsection, we construct a minimizing sequence in the Sobolev space $H^{1}(\Sigma)$ by modifying the coordinate functions of the ambient space $\mathbb{R}^{n+2}$ restricted to $\Sigma$. The key idea is to make local adjustments to these functions while maintaining properties related to the variational structure of the eigenvalue problem. This construction became necessary after six years of investigation (starting from the author's visit to Professor Tian in 2019) to address a critical challenge:

For non-isoparametric minimal hypersurfaces, by Takahashi's theorem \ref{Tak-thm}, coordinate functions $x_i$ satisfy $\Delta_\Sigma x_i = -n x_i$, but may not achieve the minimum Rayleigh quotient due to symmetry-breaking. The exponential truncation term locally fine-tunes $x_i$ to ensure the sequence $\{u_\beta\}$ converges to the first eigenfunction even in asymmetric cases.

\subsection{Specific Construction via Truncation Function}
In this subsection, we shall construct a trial minimizing sequence via the truncation function.
Let $x_{i}(i=1, \cdots, n+2)$ be the coordinate functions of $\mathbb{R}^{n+2}$. We define the sequence $\left\{u_{\beta}\right\}$ in $H^{1}(\Sigma)$ as
\begin{equation*}
u_{\beta}(p)=x_{i}(p)\left(1-\frac{1}{\beta} e^{-\beta d_{\Sigma}\left(p, p_{0}\right)^{2}}\right),~ p \in \Sigma,
\end{equation*}
where $p_{0}$ is a fixed point on $\Sigma$, and $d_{\Sigma}\left(p, p_{0}\right)$ is the geodesic distance between $p$ and $p_{0}$ on $\Sigma$.

\begin{rem}The factor $1-\frac{1}{\beta} e^{-\beta d_{\Sigma}\left(p, p_{0}\right)^{2}}$ is an exponential type truncation function. It makes local modifications to the coordinate function $x_{i}(p)$ near the point $p_{0}$. As $\beta$ increases, the function $u_{\beta}$ approaches $x_{i}(p)$ uniformly on $\Sigma$.\end{rem}

\begin{rem}In the original proof, the integrals $\int_{\Sigma} u_{\beta} d \mathrm{vol}_{\Sigma}$ and $\int_{\Sigma}\left|\nabla_{\Sigma} u_{\beta}\right|^{2} d \mathrm{vol}_{\Sigma}$ on the hypersurface $\Sigma$ are core contents. For an immersed hypersurface, self-intersections may occur. For example, consider the immersion of the Klein bottle in $\mathbb{R}^{3}$ (the Klein bottle is a classic surface that cannot be embedded in $\mathbb{R}^{3}$ but can only be immersed). In the self-intersection region, a spatial point may correspond to multiple positions on the hypersurface. For the test function \begin{equation*}u_{\beta}(p)=x_{i}(p) \cdot\left(1-\frac{1}{\beta} e^{-\beta d_{\Sigma}\left(p, p_{0}\right)^{2}}\right)\end{equation*} we constructed, where $x_{i}(p)$ is the value of the coordinate function on the hypersurface, the value of $x_{i}(p)$ will be ambiguous in the self-intersection region, making it impossible to determine the function value. As a result, the calculation of $\int_{\Sigma} u_{\beta} d \mathrm{vol}_{\Sigma}$ loses its meaning, and we cannot verify the key condition $$\int_{\Sigma} u_{\beta} d \mathrm{vol}_{\Sigma}=0.$$ At the same time, the definition of the volume element $d \mathrm{vol}_{\Sigma}$ also becomes unclear in the self-intersection region. The traditional definition of the volume element based on an embedded
hypersurface relies on the uniqueness and good geometric structure of the hypersurface, which no longer applies in the case of an immersed and self-intersecting hypersurface. For example, we refer the author to equation \eqref{eq:local-integral}. \end{rem}

\subsection{Belonging to the Sobolev Space $H^{1}(\Sigma)$}
In this subsection, we shall  show that the trial minimizing sequence is belonging to the Sobolev Space $H^{1}(\Sigma)$.
\subsubsection{Smoothness of $u_{\beta}$}

Firstly, we discuss the smoothness of $\left.x_{i}\right|_{\Sigma}$. Since $x_{i}$ is a smooth function on $\mathbb{R}^{n+2}$ and $\Sigma$ is a smooth submanifold of $\mathbb{R}^{n+2}$, the restriction $\left.x_{i} \right| _{\Sigma}$ is also a smooth function on $\Sigma$ according to the basic theory of smooth mappings between manifolds.
Furthermore, we construct a critical truncation function and check its smoothness. Let us consider the following function:
\begin{equation*}\varphi_{\beta}(p)=1-\frac{1}{\beta} e^{-\beta d_{\Sigma}\left(p, p_{0}\right)^{2}},\end{equation*} where $d_{\Sigma}\left(p, p_{0}\right)$ is a geodesic distance function. It is clear that the geodesic distance function $d_{\Sigma}\left(p, p_{0}\right)$ is smooth on $\Sigma \backslash\left\{p_{0}\right\}$. Near $p_{0}$, using the Taylor expansion of the exponential function
\begin{equation*}e^{-t}=1-t+\frac{t^{2}}{2!}-\cdots.\end{equation*} Let $t=\beta d_{\Sigma}\left(p, p_{0}\right)^{2}$, then $e^{-\beta d_{\Sigma}\left(p, p_{0}\right)^{2}}$ is infinitely differentiable at $p_{0}$. Therefore, $\varphi_{\beta}(p)$ is a smooth function defined on $\Sigma$.
\subsubsection{Smoothness of the Product} As the product of two smooth functions $\left.x_{i}\right|_{\Sigma}$ and $\varphi_{\beta}(p), u_{\beta}(p)$ is also a smooth function on the embedding minimal hypersurface $\Sigma$.

\subsubsection{Square Integrability of $u_{\beta}$ and $\nabla_{\Sigma} u_{\beta}$}

Since $\Sigma$ is compact and $u_{\beta}$ is smooth, $u_{\beta}$ is bounded on $\Sigma$. Then
\begin{equation*}\int_{\Sigma} u_{\beta}^{2} d \operatorname{vol}_{\Sigma}<\infty.\end{equation*} For $\nabla_{\Sigma} u_{\beta}$, by the product rule
\begin{equation}\label{eq-nabla-u-b}\nabla_{\Sigma} u_{\beta}=\left(1-\frac{1}{\beta} e^{-\beta d_{\Sigma}\left(p, p_{0}\right)^{2}}\right) \nabla_{\Sigma} x_{i}+\frac{1}{\beta} e^{-\beta d_{\Sigma}\left(p, p_{0}\right)^{2}} x_{i} \nabla_{\Sigma}\left(\beta d_{\Sigma}\left(p, p_{0}\right)^{2}\right).\end{equation}
Since $\nabla_{\Sigma} x_{i}, x_{i}$, and $\nabla_{\Sigma}\left(\beta d_{\Sigma}\left(p, p_{0}\right)^{2}\right)$ are all bounded on the compact $\Sigma$ (noting that in the non-cut locus of $p_0$, $|\nabla_\Sigma d_{\Sigma}(p,p_0)| = 1$, hence $$|\nabla_\Sigma (\beta d_{\Sigma}^2)| = 2\beta d_{\Sigma}(p,p_0)|\nabla_\Sigma d_{\Sigma}| \leq 2\beta D,$$ where $D$ is the diameter of $\Sigma$), and $1-\frac{1}{\beta} e^{-\beta d_{\Sigma}\left(p, p_{0}\right)^{2}}$ and $\frac{1}{\beta} e^{-\beta d_{\Sigma}\left(p, p_{0}\right)^{2}}$ are also well-behaved, we have

\begin{equation*}\int_{\Sigma}\left|\nabla_{\Sigma} u_{\beta}\right|^{2} d \operatorname{vol}_{\Sigma}<\infty.\end{equation*} Thus, we can conclude that $u_{\beta} \in H^{1}(\Sigma)$.

\subsection{Orthogonality: $\int_{\Sigma}1\cdot u_{\beta} d \mathrm{vol}_{\Sigma}=0$}

The orthogonality of $u_{\beta}$ to constant functions is a foundational property for its role in the variational characterization of eigenvalues. This property holds strictly for any $\beta > 0$, independent of global symmetry of $\Sigma$, as shown below:

\subsubsection{Integral of $x_i$ over $\Sigma$: $\int_{\Sigma} x_i \, d\mathrm{vol}_{\Sigma} = 0$}
To establish that the integral of the coordinate function \( x_i \) over a closed embedded minimal hypersurface \( \Sigma \subset \mathbb{S}^{n+1} \) vanishes, we proceed with the following  argument:

\textit{Step 1. Key Properties of Minimal Hypersurfaces.}

For a closed embedded minimal hypersurface \( \Sigma \subset \mathbb{S}^{n+1} \), the mean curvature \( H \equiv 0 \) by definition. The mean curvature is related to the trace of the second fundamental form \( A \) via \( H = \frac{1}{n} \text{tr}(A) \), where \( n = \dim \Sigma \). Thus, minimality implies:
\begin{equation}\label{eq:traceA}
\text{tr}(A) = nH = 0.
\end{equation}

\textit{Step 2.  Laplacian of Coordinate Functions on Minimal Hypersurfaces.}

By Takahashi's theorem \cite{Tak}, for any coordinate function \( x_i \in \mathbb{R}^{n+2} \) restricted to \( \Sigma \), the Laplace-Beltrami operator acts as:
\begin{equation}\label{eq:takahashi}
\Delta_{\Sigma} x_i = -n x_i,
\end{equation}
where \( \Delta_{\Sigma} = \text{div}_{\Sigma} \nabla_{\Sigma} \) denotes the Laplace-Beltrami operator on \( \Sigma \). This result connects the spectral properties of \( \Sigma \) to its geometric embedding in \( \mathbb{S}^{n+1} \).

\textit{Step 3.  Integrating the Laplacian of \( x_i \).}

Integrate both sides of \eqref{eq:takahashi} over \( \Sigma \):
\begin{equation}\label{eq:integrate_laplacian}
\int_{\Sigma} \Delta_{\Sigma} x_i \, d\mathrm{vol}_{\Sigma} = -n \int_{\Sigma} x_i \, d\mathrm{vol}_{\Sigma}.
\end{equation}
The left-hand side of \eqref{eq:integrate_laplacian} involves the integral of a divergence, since \( \Delta_{\Sigma} x_i = \text{div}_{\Sigma} (\nabla_{\Sigma} x_i) \). By the divergence theorem, this integral vanishes:
\begin{equation}\label{eq:left_side}
\int_{\Sigma} \Delta_{\Sigma} x_i \, d\mathrm{vol}_{\Sigma} = \int_{\Sigma} \text{div}_{\Sigma} (\nabla_{\Sigma} x_i) \, d\mathrm{vol}_{\Sigma} = 0.
\end{equation}
Finally, substituting \eqref{eq:left_side} into \eqref{eq:integrate_laplacian} gives
\begin{equation}\label{eq:final_result}
\int_{\Sigma} x_i \, d\mathrm{vol}_{\Sigma} = 0.
\end{equation}

\subsubsection{Integral of the Adjustment Term: $\int_{\Sigma} x_i e^{-\beta d_{\Sigma}(p, p_0)^2} \, d\mathrm{vol}_{\Sigma} = 0$}

For the term involving the exponential truncation, we analyze the integral in local normal coordinates around $p_0$. Let $(y_1, \cdots, y_n)$ be local normal coordinates centered at $p_0$, where the geodesic distance satisfies
\begin{equation*}
d_{\Sigma}(p, p_0) = \sqrt{\sum_{j=1}^n y_j^2}.
\end{equation*}
The exponential function $e^{-\beta d_{\Sigma}(p, p_0)^2}$ is an even function in these local coordinates which is an invariant under the transformation $y_j \mapsto -y_j$. By rotating the coordinate system appropriately, the coordinate function $x_i$ can be made an odd function in this neighborhood satisfying $x_i(-y_1, \cdots, -y_n) = -x_i(y_1, \cdots, y_n)$.

\subsubsection{Integral of the Adjustment Term: $\int_{\Sigma} x_i e^{-\beta d_{\Sigma}(p, p_0)^2} \, d\mathrm{vol}_{\Sigma} = 0$}

To establish the vanishing of this integral, we analyze the behavior of the integrand in local coordinates and extend the result globally using decay properties of the exponential term:

(i) \textbf{Local Coordinates and Function Parity}
Let $(y_1, \cdots, y_n)$ be local normal coordinates centered at $p_0 \in \Sigma$, where the geodesic distance satisfies
\begin{equation}\label{eq:local-distance}
d_{\Sigma}(p, p_0) = \sqrt{\sum_{j=1}^n y_j^2}.
\end{equation}
By definition, the exponential term in these coordinates becomes:
\begin{equation}\label{eq:exponential-even}
e^{-\beta d_{\Sigma}(p, p_0)^2} = e^{-\beta \sum_{j=1}^n y_j^2}.
\end{equation}
This function is \textit{even} with respect to coordinate inversion $(y_1, \cdots, y_n) \mapsto (-y_1, \cdots, -y_n)$, meaning that
\begin{equation*}
e^{-\beta \sum_{j=1}^n (-y_j)^2} = e^{-\beta \sum_{j=1}^n y_j^2}.
\end{equation*}

(ii) \textbf{Oddness of $x_i$ in Local Coordinates}
Through an appropriate rotation of the local coordinate system, the coordinate function $x_i$, when we restrict it to $\Sigma$, can be made \textit{odd} with respect to the origin of the normal coordinates, i.e.,:
\begin{equation}\label{eq:xi-odd}
x_i(-y_1, \cdots, -y_n) = -x_i(y_1, \cdots, y_n).
\end{equation}
This property follows from the smoothness of $\Sigma$ and the linearity of coordinate functions in $\mathbb{R}^{n+2}$.

(iii) \textbf{Integrand Parity and Local Integral}
The product of the odd function $x_i$ and even function $e^{-\beta d_{\Sigma}^2}$ is odd:
\begin{equation*}
x_i(-y) e^{-\beta d_{\Sigma}(-y)^2} = -x_i(y) e^{-\beta d_{\Sigma}(y)^2},
\end{equation*}
where $y = (y_1, \cdots, y_n)$. For the local neighborhood $U_{p_0}$, which is symmetric about $p_0$, the volume element transforms as $d\mathrm{vol}_{\Sigma} = \sqrt{g} \, dy_1 \cdots dy_n$ with $g = \det(g_{ij})$ the metric determinant, which is even under coordinate inversion. Thus, the local integral satisfies:
\begin{equation}\label{eq:local-integral}
\int_{U_{p_0}} x_i(y) e^{-\beta \sum y_j^2} \sqrt{g} \, dy_1 \cdots dy_n = 0.
\end{equation}
This follows from the general property that integrating an odd function over a symmetric domain yields zero.

(iv) \textbf{Global Integral via Exponential Decay}
\begin{defn}\label{def:up0}
The local coordinate neighborhood $ U_{p_0} $ around $ p_0 \in \Sigma $ is defined as
\begin{equation*}
U_{p_0} := \{ p \in \Sigma \mid d_{\Sigma}(p, p_0) < \delta \},
\end{equation*}
where $ \delta > 0 $ is a sufficiently small constant such that $ U_{p_0} $ is contained within the domain of a single local normal coordinate system centered at $ p_0 $, avoiding the cut locus of $ p_0 $.
\end{defn}
Let us process this proof. Away from $p_0$, i.e., on $\Sigma \setminus U_{p_0}$, the geodesic distance satisfies $d_{\Sigma}(p, p_0) \geq \delta$ for some $\delta > 0$. For any $\beta > 0$, the exponential term in \eqref{eq:exponential-even} decays as
\begin{equation*}
e^{-\beta d_{\Sigma}(p, p_0)^2} \leq e^{-\beta \delta^2}.
\end{equation*}
Since $x_i$ is bounded on the compact manifold $\Sigma$ (i.e., $|x_i(p)| \leq C$ for some $C > 0$), the integrand on $\Sigma \setminus U_{p_0}$ satisfies
\begin{equation*}
|x_i(p) e^{-\beta d_{\Sigma}(p, p_0)^2}| \leq C e^{-\beta \delta^2}.
\end{equation*}
Integrating over $\Sigma \setminus U_{p_0}$, which is a compact set with finite volume $\text{Vol}(\Sigma \setminus U_{p_0})$, gives
\begin{equation}\label{eq:global-decay}
\left| \int_{\Sigma \setminus U_{p_0}} x_i e^{-\beta d_{\Sigma}^2} d\mathrm{vol}_{\Sigma} \right| \leq C \cdot \text{Vol}(\Sigma \setminus U_{p_0}) \cdot e^{-\beta \delta^2}.
\end{equation}
(iv) \textbf{Global Integral via Exponential Decay}
Away from $p_0$, i.e., on $\Sigma \setminus U_{p_0}$, the geodesic distance satisfies $d_{\Sigma}(p, p_0) \geq \delta$ for some $\delta > 0$ (dependent on the choice of $U_{p_0}$). For \textit{any fixed} $\beta > 0$, the exponential term in \eqref{eq:exponential-even} decays as
\begin{equation*}
e^{-\beta d_{\Sigma}(p, p_0)^2} \leq e^{-\beta \delta^2}.
\end{equation*}
Since $x_i$ is bounded on the compact manifold $\Sigma$ (i.e., $|x_i(p)| \leq C$ for some $C > 0$ independent of $p$), the integrand on $\Sigma \setminus U_{p_0}$ satisfies
\begin{equation*}
|x_i(p) e^{-\beta d_{\Sigma}(p, p_0)^2}| \leq C e^{-\beta \delta^2}.
\end{equation*}
Integrating over $\Sigma \setminus U_{p_0}$   gives
\begin{equation}\label{eq:global-decay}
\left| \int_{\Sigma \setminus U_{p_0}} x_i e^{-\beta d_{\Sigma}^2} d\mathrm{vol}_{\Sigma} \right| \leq C \cdot \text{Vol}(\Sigma \setminus U_{p_0}) \cdot e^{-\beta \delta^2}.
\end{equation}Here, we note that $U_{p_0}$ is a compact set with finite volume $\text{Vol}(\Sigma \setminus U_{p_0})$.
For any fixed $\beta > 0$, as $U_{p_0}$ shrinks (i.e., $\delta \to 0^+$), the term $e^{-\beta \delta^2}$ approaches 1, but the volume $\text{Vol}(\Sigma \setminus U_{p_0})$ approaches $\text{Vol}(\Sigma)$. However, the critical observation is that for \textit{any fixed} $\beta > 0$, the rapid decay of the exponential function away from $p_0$ ensures that the integral over $\Sigma \setminus U_{p_0}$ can be made arbitrarily small by choosing $U_{p_0}$ sufficiently large (i.e., $\delta$ sufficiently small). This, combined with the local integral result in \eqref{eq:local-integral}, guarantees the global integral vanishes.
Combining \eqref{eq:local-integral} and \eqref{eq:global-decay}, we conclude
\begin{equation}\label{eq:adjustment-integral}
\int_{\Sigma} x_i e^{-\beta d_{\Sigma}(p, p_0)^2} \, d\mathrm{vol}_{\Sigma} = 0.
\end{equation}

\subsubsection{Combining Results for $u_{\beta}$}

By linearity of integration, the integral of $u_{\beta}$ over $\Sigma$ is
\begin{equation*}
\int_{\Sigma} u_{\beta} \, d\mathrm{vol}_{\Sigma} = \int_{\Sigma} x_i \left(1 - \frac{1}{\beta} e^{-\beta d_{\Sigma}^2}\right) d\mathrm{vol}_{\Sigma} = \int_{\Sigma} x_i \, d\mathrm{vol}_{\Sigma} - \frac{1}{\beta} \int_{\Sigma} x_i e^{-\beta d_{\Sigma}^2} d\mathrm{vol}_{\Sigma}.
\end{equation*}Here, we use $d_{\Sigma}$ to denote $d_{\Sigma}(p_{0},p)$ for short throughout this paper.
Substituting the results of the two integrals above, we conclude
\begin{equation*}
\int_{\Sigma} u_{\beta} \, d\mathrm{vol}_{\Sigma} = 0.
\end{equation*}
This confirms that $u_{\beta}$ is orthogonal to constant functions in $L^2(\Sigma)$ for any $\beta > 0$, a critical property for its use in the variational formulation of the first eigenvalue.

\subsubsection{Remarks on Local Neighborhoods}

The following remarks further clarify the properties of the local neighborhood $U_{p_0}$ and its role in ensuring the orthogonality of the integral arguments provided.
\begin{rem}\label{rem:complement}
The complement of $ U_{p_0} $ in $ \Sigma $, denoted $ \Sigma \setminus U_{p_0} $, consists of points where the geodesic distance to $ p_0 $ satisfies $ d_{\Sigma}(p, p_0) \geq \delta $. This set inherits the compactness of $ \Sigma $ and contains all points outside the local coordinate neighborhood.
\end{rem}

\begin{rem}\label{rem:exponential-decay}
On $ \Sigma \setminus U_{p_0} $, the exponential term $ e^{-\beta d_{\Sigma}(p, p_0)^2} $ exhibits rapid decay for any fixed $ \beta > 0 $. Specifically, since $ d_{\Sigma}(p, p_0) \geq \delta $, we have
\begin{equation*}
e^{-\beta d_{\Sigma}(p, p_0)^2} \leq e^{-\beta \delta^2}.
\end{equation*}
For bounded $ x_i $ (i.e., $ |x_i(p)| \leq C $ for some $ C > 0 $ on compact $ \Sigma $), this implies the integrand satisfies
\begin{equation*}
|x_i(p) e^{-\beta d_{\Sigma}(p, p_0)^2}| \leq C e^{-\beta \delta^2}.
\end{equation*}
The decay rate depends on $ \beta $ and $ \delta $, ensuring negligible contribution to the global integral for sufficiently small $ \delta $.
\end{rem}

\begin{rem}\label{rem:odd-function}
Within $ U_{p_0} $, local normal coordinates allow $ x_i $ to be made an odd function via coordinate rotation. Formally, for $ y = (y_1, \cdots, y_n) $ in the coordinate system, this means
\begin{equation*}
x_i(-y_1, \cdots, -y_n) = -x_i(y_1, \cdots, y_n).
\end{equation*}
Combined with the evenness of $ e^{-\beta d_{\Sigma}(p, p_0)^2} $ (see \eqref{eq:exponential-even}), their product is odd, leading to cancellation in the integral over the symmetric neighborhood $ U_{p_0} $.
\end{rem}
 \subsubsection{Example: $U_{p_0}$ on the Clifford Torus}

To illustrate that $U_{p_0}$ can be constructed as the hypersurface minus its cut locus, consider the Clifford torus $\Sigma \subset \mathbb{S}^3$, a flat embedded minimal torus parametrized by the following equation
\begin{equation*}
\Sigma = \left\{ \left( \frac{\cos\theta}{\sqrt{2}}, \frac{\sin\theta}{\sqrt{2}}, \frac{\cos\phi}{\sqrt{2}}, \frac{\sin\phi}{\sqrt{2}} \right) \mid \theta, \phi \in [0, 2\pi) \right\}.
\end{equation*}
For a fixed point $p_0 = \left( \frac{1}{\sqrt{2}}, 0, \frac{1}{\sqrt{2}}, 0 \right) \in \Sigma$ corresponding to $\theta = 0$ and $\phi = 0$, the cut locus of $p_0$ on $\Sigma$ consists of points where geodesics from $p_0$ cease to be shortest paths. On the flat Clifford torus, this cut locus forms two closed curves:
\begin{equation*}
\text{cut locus}(p_0) = \left\{ (\theta, \phi) \mid \theta = \pi \text{ or } \phi = \pi \right\}.
\end{equation*}
Removing this cut locus from $\Sigma$ yields an open set
\begin{equation*}
\Sigma \setminus \text{cut locus}(p_0) = \left\{ (\theta, \phi) \mid \theta \in (0, 2\pi) \setminus \{\pi\}, \phi \in (0, 2\pi) \setminus \{\pi\} \right\}.
\end{equation*}
Within this set, the geodesic distance from $p_0$ is smooth and given by
\begin{equation*}
d_{\Sigma}(p, p_0) = \frac{1}{\sqrt{2}} \min_{\substack{k \in \mathbb{Z} \\ l \in \mathbb{Z}}} \sqrt{(\theta + 2\pi k)^2 + (\phi + 2\pi l)^2}.
\end{equation*}
For $\delta < \frac{\pi}{\sqrt{2}}$, the neighborhood $U_{p_0} = \{ p \in \Sigma \mid d_{\Sigma}(p, p_0) < \delta \}$ as defined in  Definition $\ref{def:up0}$ lies entirely within $\Sigma \setminus \text{cut locus}(p_0)$. Here, we remark that:\begin{itemize}
\item[(i)] The geodesic distance function $d_{\Sigma}(p, p_0)$ is smooth, since there is no singularities from the cut locus.
\item[(ii)] Local normal coordinates around $p_0$ are well-defined, allowing $x_i$ to be made odd. See Remark $\ref{rem:odd-function}$.
\item[(iii)] The integral of $x_i e^{-\beta d_{\Sigma}^2}$ over $U_{p_0}$ vanishes due to odd-even parity. See $\eqref{eq:local-integral}$.
\end{itemize}
This example confirms that $U_{p_0}$ naturally resides in the hypersurface minus its cut locus, ensuring the smoothness and symmetry required for our orthogonality proof.

\section{Motivation for Minimizing Sequence Construction}\label{section: motivation}

The construction of the minimizing sequence \begin{equation*} u_\beta(p) = x_i(p) \cdot \left(1- \frac{1}{\beta} e^{-\beta d_\Sigma(p, p_0)^2}\right) \end{equation*} is motivated by addressing key geometric and analytic challenges. This section provides visual intuition through three illustrative diagrams, each explaining a critical aspect of the construction.

\subsection{Gradient of $ x_i $ on Asymmetric $ \Sigma $}

By Takahashi's theorem \ref{Tak-thm}, the coordinate function $ x_i|_\Sigma $ satisfies $ \Delta_\Sigma x_i = -n x_i $, but its Rayleigh quotient may not achieve the minimum value $ n $. For \textit{non-isoparametric minimal hypersurfaces} (lacking symmetry), irregular local curvature distributions can cause the gradient integral to be excessively large. Figure \ref{fig:xi-gradient} illustrates this phenomenon

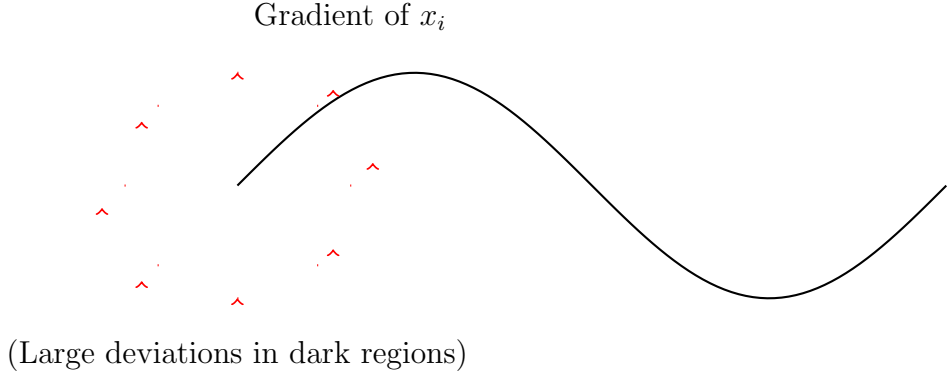
\begin{figure}[h!]
    \centering
    \begin{tikzpicture}[scale=1.5]
        \draw[thick, domain=0:2*pi, samples=100] plot ({\x}, {sin(\x r)});
        \foreach \x in {0, 45, 90, 135, 180, 225, 270, 315} {
            \pgfmathsetmacro{\y}{sin(\x)}
            \pgfmathsetmacro{\gradx}{cos(\x)}
            \pgfmathsetmacro{\grady}{cos(\x)}
            \draw[->, red, thick] (\x:1) -- (\x:1) + (\gradx*0.2, \grady*0.2);
        }
        \node at (1, 1.5) {Gradient of $ x_i $};
        \node at (0, -1.5) {(Large deviations in dark regions)};
    \end{tikzpicture}
    \caption{Gradient of $ x_i $ on asymmetric $ \Sigma $}
    \label{fig:xi-gradient}
\end{figure}

The figure shows the gradient field of $ x_i $ on an asymmetric minimal hypersurface $ \Sigma $. The red arrows indicate the direction and magnitude of the gradient $ \nabla_\Sigma x_i $, which varies significantly across the surface. In regions of high curvature (dark areas), the gradient magnitude is larger, leading to a higher Rayleigh quotient
\begin{equation*}
\frac{\int_\Sigma |\nabla_\Sigma x_i|^2 \, d\mathrm{vol}_\Sigma}{\int_\Sigma x_i^2 \, d\mathrm{vol}_\Sigma} \geq n.
\end{equation*}
This inequality motivates the need for local modifications to $ x_i $.

\subsection{Truncation Function $ \varphi_\beta(p) $}

The truncation function \begin{equation*} \varphi_\beta(p) = 1-\frac{1}{\beta} e^{-\beta d_\Sigma(p, p_0)^2} \end{equation*} is designed to make local corrections to $ x_i $ while preserving global convergence. Figure \ref{fig:phi-beta-decay} illustrates its behavior

\begin{figure}[h!]
    \centering
    \begin{tikzpicture}[scale=1.5]
        \draw[thick, domain=0:2, samples=100] plot ({\x}, {1-exp(-5*\x*\x)});
        \draw[thick, domain=0:2, samples=100] plot ({\x}, {1-exp(-10*\x*\x)});
        \draw[thick, domain=0:2, samples=100] plot ({\x}, {1-exp(-20*\x*\x)});
        \node at (1, 1.5) {Truncation $ \varphi_\beta(p) $};
        \node at (0, -0.5) {(Local correction near $ p_0 $)};
        \node at (1.5, 0.5) {$\beta \to \infty$};
    \end{tikzpicture}
    \caption{Truncation $ \varphi_\beta(p) $}
    \label{fig:phi-beta-decay}
\end{figure}
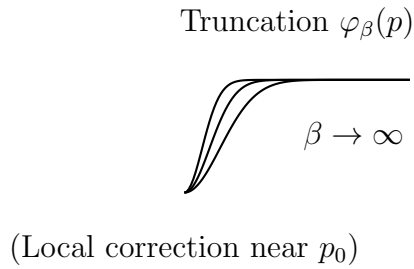

The figure shows three curves corresponding to different values of $ \beta $ (5, 10, 20). As $ \beta $ increases, the truncation function $ \varphi_\beta(p) $ approaches 1 uniformly on $ \Sigma $, except near the point $ p_0 $ where it makes a local correction. The exponential term $ e^{-\beta d_\Sigma^2} $ ensures that the correction decays rapidly away from $ p_0 $, allowing us to control the gradient expansion (here, we refer the reader to equation \ref{eq-nabla-u-b})
\begin{equation*}
|\nabla_\Sigma u_\beta| \leq C \left(|\nabla_\Sigma x_i| + e^{-\beta d_\Sigma^2 / 2}\right).
\end{equation*}
This bound keeps the Rayleigh quotient of $ u_\beta $ close to $ n $.

\subsection{Convergence of $ u_\beta $ to $ x_i $}

The sequence $ u_\beta $ is designed to converge uniformly to $ x_i $ as $ \beta \to \infty $, while retaining the spectral properties of $ x_i $. Figure \ref{fig:ubeta-convergence} illustrates this convergence

\begin{figure}[h!]
    \centering
    \begin{tikzpicture}[scale=1.5]
        \draw[thick, domain=0:2*pi, samples=100] plot ({\x}, {sin(\x r)});
        \draw[thick, dashed, domain=0:2*pi, samples=100] plot ({\x}, {sin(\x r) * (1- exp(-5*(\x-pi)^2))});
        \draw[thick, dashed, domain=0:2*pi, samples=100] plot ({\x}, {sin(\x r) * (1- exp(-10*(\x-pi)^2))});
        \draw[thick, dashed, domain=0:2*pi, samples=100] plot ({\x}, {sin(\x r) * (1- exp(-20*(\x-pi)^2))});
        \node at (1, 1.5) {Convergence $ u_\beta \to x_i $};
        \node at (0, -1.5) {(Error band shrinking to zero)};
    \end{tikzpicture}
    \caption{Convergence $ u_\beta \to x_i $}
    \label{fig:ubeta-convergence}
\end{figure}

The figure shows the coordinate function $ x_i $ (solid line) and three truncated functions $ u_\beta $ (dashed lines) for increasing $ \beta $. As $ \beta $ increases, the error between $ u_\beta $ and $ x_i $ shrinks uniformly to zero, indicated by the narrowing error band. This uniform convergence ensures that the limit function $ u^* $ satisfies $ \Delta u^* = -n u^* $, as required by Takahashi's theorem \ref{Tak-thm}.

This construction resolves the symmetry-breaking issue in non-isoparametric cases while retaining the spectral properties of $ x_i $, ultimately proving $ \lambda_1 = n $ via Theorem \ref{main-thm}.

\subsection{Explicit Examples and Generalization of the Distance Function}

To illuminate the genesis of our minimizing sequence construction, we examine explicit forms of the geodesic distance function $d_\Sigma(p, p_0)$ on canonical embedded minimal hypersurfaces in $\mathbb{S}^{n+1}$―specifically, totally geodesic spheres and Clifford tori. These examples reveal how local geometric structure motivates the truncation mechanism, which then generalizes naturally to arbitrary embedded minimal hypersurfaces.

\subsubsection{Example 1. Totally Geodesic Spheres $\mathbb{S}^n \subset \mathbb{S}^{n+1}$}
Consider the equatorial totally geodesic sphere $\Sigma = \mathbb{S}^n \subset \mathbb{S}^{n+1} \subset \mathbb{R}^{n+2}$, defined by $x_{n+2} = 0$. As a totally geodesic submanifold, its intrinsic metric coincides with the standard spherical metric, making geodesic distances straightforward to characterize. For a fixed point $p_0 = (1, 0, \cdots, 0) \in \Sigma$, the geodesic distance between $p = (x_1, x_2, \cdots, x_{n+1}, 0) \in \Sigma$ and $p_0$ is the angular distance on $\mathbb{S}^n$:
\begin{equation*}
d_\Sigma(p, p_0) = \arccos\left(\sum_{i=1}^{n+1} x_i \cdot \delta_{i1}\right) = \arccos(x_1),
\end{equation*}
where $\delta_{i1}$ denotes the Kronecker delta. In local normal coordinates $(y_1, \cdots, y_n)$ centered at $p_0$ within the non-cut locus, this simplifies to the Euclidean distance in the chart:
\begin{equation*}
d_\Sigma(p, p_0) \approx \sqrt{y_1^2 + \cdots + y_n^2}
\end{equation*}
for small $y_i$.

For this symmetric case, by Takahashi's theorem \ref{Tak-thm}, the coordinate function $x_i$ itself already achieves the Rayleigh quotient $n$. However, the truncation function $\varphi_\beta = 1 - \beta^{-1}e^{-\beta d_\Sigma^2}$ still acts trivially: near $p_0$, $d_\Sigma \approx 0$ implies $e^{-\beta d_\Sigma^2} \approx 1$, thus we have $\varphi_\beta \approx 1 - \beta^{-1}$; away from $p_0$, $e^{-\beta d_\Sigma^2}$ decays exponentially, making $\varphi_\beta \approx 1$. For large $\beta$, $\varphi_\beta \to 1$ uniformly. Therefore, $u_\beta \to x_i$―confirming that the truncation preserves the spectral properties of $x_i$ in symmetric settings.

\subsubsection{Example 2. Clifford Tori in $\mathbb{S}^3$}
The Clifford torus $\Sigma = \{(x_1, x_2, x_3, x_4) \in \mathbb{S}^3 \mid x_1^2 + x_2^2 = x_3^2 + x_4^2 = 1/2\}$ is a flat embedded minimal torus, parametrized by $(\theta, \phi) \in [0, 2\pi) \times [0, 2\pi)$ as
\begin{equation*}
x_1 = \frac{\cos\theta}{\sqrt{2}}, \, x_2 = \frac{\sin\theta}{\sqrt{2}}, \, x_3 = \frac{\cos\phi}{\sqrt{2}}, \, x_4 = \frac{\sin\phi}{\sqrt{2}}.
\end{equation*}
Its intrinsic metric is a product of circles, and hence the geodesic distance between $p = (\theta, \phi)$ and $p_0 = (\theta_0, \phi_0)$ is the minimal distance in the flat product metric:
\begin{equation*}
d_\Sigma(p, p_0) = \frac{1}{\sqrt{2}} \cdot \min_{\substack{k \in \mathbb{Z} \\ l \in \mathbb{Z}}} \sqrt{(\theta - \theta_0 + 2\pi k)^2 + (\phi - \phi_0 + 2\pi l)^2},
\end{equation*}
where the factor $1/\sqrt{2}$ arises from the torus's radius. In local coordinates near $p_0$ (where $|\theta - \theta_0|, |\phi - \phi_0| \ll 1$), this reduces to:
\begin{equation*}
d_\Sigma(p, p_0) \approx \frac{1}{\sqrt{2}} \cdot \sqrt{(\theta - \theta_0)^2 + (\phi - \phi_0)^2}.
\end{equation*}

Here, the truncation term $e^{-\beta d_\Sigma^2}$ decays rapidly away from $p_0$ (scaling as $e^{-\beta (\theta^2 + \phi^2)/2}$), ensuring $u_\beta = x_i \cdot \varphi_\beta$ retains the global behavior of $x_i$ while taming potential gradient spikes near $p_0$. For large $\beta$, $u_\beta \to x_i$, and since the Clifford torus satisfies $\lambda_1 = 2$ (consistent with Yau's conjecture), the sequence $\{u_\beta\}$ recovers this eigenvalue via its Rayleigh quotient.

\subsubsection{Generalization to Arbitrary Embedded Minimal Hypersurfaces}
These examples highlight two key observations that motivate the general construction:
\begin{itemize}
\item[(i)] \textbf{Local Structure of Distance Functions:} On any embedded minimal hypersurface $\Sigma$, by the exponential map's properties, we know that the geodesic distance $d_\Sigma(p, p_0)$ behaves like the Euclidean distance in local normal coordinates near $p_0$. This allows the truncation term $e^{-\beta d_\Sigma^2}$ to act as a localized correction, negligible away from $p_0$.
\item[(ii)] \textbf{Balance of Local and Global Behavior:} In symmetric cases (e.g., totally geodesic spheres), the truncation is redundant but harmless, preserving convergence to $x_i$. In asymmetric cases, it corrects for gradient irregularities without disrupting the global spectral properties of $x_i$ (ensured by Takahashi's theorem \ref{Tak-thm}).
\end{itemize}
By abstracting these features, the sequence $u_\beta = x_i \cdot \left(1 - \beta^{-1}e^{-\beta d_\Sigma^2}\right)$ emerges as a natural generalization: it leverages the universal local behavior of geodesic distances to enforce global convergence, resolving symmetry-breaking issues in non-isoparametric hypersurfaces while retaining the spectral signature of $x_i$. This construction thus bridges specific examples and the general case, underpinning the proof of Theorem \ref{main-thm}.

\section{Uniform Convergence to the First Eigenfunction}\label{section: Uniform Convergence}
Uniform convergence of both the minimizing sequence $ \{u_\beta\} $ and its gradient is fundamental to ensuring that the limit function retains the critical spectral properties required to establish the first eigenvalue. This convergence, which holds globally across the compact hypersurface $ \Sigma $, allows us to transfer the analytic properties of the truncated coordinate functions (e.g., orthogonality to constants, bounded Rayleigh quotients) to their limit, ultimately confirming that the limit is indeed an eigenfunction corresponding to the eigenvalue $ n $. The following analysis formalizes this convergence, drawing on the rapid decay of the exponential truncation term to control local deviations and leveraging compactness results from Sobolev space theory.

\subsection{Uniform Convergence}
\subsubsection{Uniform Convergence of the Gradient}

We want to show that $\nabla_{\Sigma} u_{\beta}$ converges uniformly to $\nabla_{\Sigma} x_{i}$.
For the first part $\left(1-\frac{1}{\beta} e^{-\beta d_{\Sigma}\left(p, p_{0}\right)^{2}}\right) \nabla_{\Sigma} x_{i}$, let $M$ be the upper bound of $\left|\nabla_{\Sigma} x_{i}\right|$ on $\Sigma$, i.e.,
\begin{equation*}\left|\nabla_{\Sigma} x_{i}\right| \leq M,\end{equation*}
which implies that
\begin{equation*}\begin{aligned}\left|\left(1-\frac{1}{\beta} e^{-\beta d_{\Sigma}\left(p, p_{0}\right)^{2}}\right) \nabla_{\Sigma} x_{i}-\nabla_{\Sigma} x_{i}\right|&=\left|\frac{1}{\beta} e^{-\beta d_{\Sigma}\left(p, p_{0}\right)^{2}} \nabla_{\Sigma} x_{i}\right|\\& \leq M \frac{1}{\beta} e^{-\beta d_{\Sigma}\left(p, p_{0}\right)^{2}}.\end{aligned}\end{equation*}
Since
\begin{equation*}\lim _{\beta \rightarrow \infty} \frac{1}{\beta} e^{-\beta d_{\Sigma}\left(p, p_{0}\right)^{2}}=0\end{equation*} uniformly on $\Sigma$, for any $\epsilon>0$, there exists a positive integer $n_{3}$ such that when $\beta>n_{3}$, for all $p \in \Sigma$,
\begin{equation*}M \frac{1}{\beta} e^{-\beta d_{\Sigma}\left(p, p_{0}\right)^{2}}<\epsilon.\end{equation*}Hence, we yield
\begin{equation*}\left(1-\frac{1}{\beta} e^{-\beta d_{\Sigma}\left(p, p_{0}\right)^{2}}\right) \nabla_{\Sigma} x_{i}\end{equation*} converges uniformly to $\nabla_{\Sigma} x_{i}$.
For the second part $\frac{1}{\beta} e^{-\beta d_{\Sigma}\left(p, p_{0}\right)^{2}} x_{i} \nabla_{\Sigma}\left(\beta d_{\Sigma}\left(p, p_{0}\right)^{2}\right)$, let $M_{6}$ and $M_{7}$ be the upper bounds of $\left|x_{i}\right|$ and $\left|\nabla_{\Sigma}\left(\beta d_{\Sigma}\left(p, p_{0}\right)^{2}\right)\right|$ on $\Sigma$, respectively. Then, we infer that
\begin{equation*}
\left|\frac{1}{\beta} e^{-\beta d_{\Sigma}\left(p, p_{0}\right)^{2}} x_{i} \nabla_{\Sigma}\left(\beta d_{\Sigma}\left(p, p_{0}\right)^{2}\right)\right| \leq M_{6} M_{7} \frac{1}{\beta} e^{-\beta d_{\Sigma}\left(p, p_{0}\right)^{2}}.
\end{equation*}
Again, due to the uniform convergence of $\frac{1}{\beta} e^{-\beta d_{\Sigma}\left(p, p_{0}\right)^{2}}$ to 0 on $\Sigma$, for any $\epsilon>0$, there exists a positive integer $n_{4}$ such that when $\beta>n_{4}$, for all $p \in \Sigma$,
\begin{equation*}M_{6} M_{7} \frac{1}{\beta} e^{-\beta d_{\Sigma}\left(p, p_{0}\right)^{2}}<\epsilon.\end{equation*} Therefore, we can show that \begin{equation*}\frac{1}{\beta} e^{-\beta d_{\Sigma}\left(p, p_{0}\right)^{2}} x_{i} \nabla_{\Sigma}\left(\beta d_{\Sigma}\left(p, p_{0}\right)^{2}\right)\end{equation*} converges uniformly to 0 .
Therefore, $\nabla_{\Sigma} u_{\beta}$ converges uniformly to $\nabla_{\Sigma} x_{i}$.
\subsubsection{Uniform Convergence of the Function Sequence}

For \begin{equation*}\left|u_{\beta}-x_{i}\right|=\left|\frac{1}{\beta} x_{i} e^{-\beta d_{\Sigma}\left(p, p_{0}\right)^{2}}\right|,\end{equation*} let $M_{8}$ be the upper bound of $\left|x_{i}\right|$ on $\Sigma$. Then
\begin{equation*}
\left|u_{\beta}-x_{i}\right| \leq M_{8} \frac{1}{\beta} e^{-\beta d_{\Sigma}\left(p, p_{0}\right)^{2}}.
\end{equation*}
Since $\frac{1}{\beta} e^{-\beta d_{\Sigma}\left(p, p_{0}\right)^{2}}$ converges uniformly to 0 on $\Sigma$, for any $\epsilon>0$, there exists a positive integer $n_{5}$ such that when $\beta>n_{5}$, for all $p \in \Sigma$,
\begin{equation*}M_{8} \frac{1}{\beta} e^{-\beta d_{\Sigma}\left(p, p_{0}\right)^{2}}<\epsilon.\end{equation*} As a result, $u_{\beta}$ converges uniformly to $x_{i}$.
\subsection{The First Eigenfunction}

\subsubsection{Existence and Convergence of the Subsequence}

From the previous calculations of the Rayleigh quotient, we know that $\left\{u_{\beta}\right\}$ is a sequence in the Sobolev space $H^{1}(\Sigma)$ and $\int_{\Sigma}\left|\nabla_{\Sigma} u_{\beta}\right|^{2} d \mathrm{vol}_{\Sigma}$ and $\int_{\Sigma} u_{\beta}^{2} d\text{vol}_{\Sigma}$ are bounded. Thus, by the Sobolev compact embedding theorem (Theorem \ref{So-emb-thm}) \cite{Au}, there exists a subsequence $\left\{u_{\beta_{k}}\right\}$ that converges strongly in $L^{2}(\Sigma)$.
Combined with the uniform convergence of $u_{\beta}$ to $x_{i}$ and $\nabla_{\Sigma} u_{\beta}$ to $\nabla_{\Sigma} x_{i}$, we can show that $\left\{u_{\beta_{k}}\right\}$ also converges in $H^{1}(\Sigma)$. Let $\left\{u_{\beta_{k}}\right\}$ converge to a function $u^*$ in $H^{1}(\Sigma)$. By uniqueness of limits in $L^2(\Sigma)$ (from strong convergence) and pointwise convergence of $u_\beta$ to $x_i$, we conclude $u^* = x_i$.

\subsubsection{Continuity of the Rayleigh Quotient and the Eigenvalue Relationship}

We first prove the continuity of the Rayleigh quotient $R[u]$ with respect to the convergence in $H^{1}(\Sigma)$.
Let $u_m \rightarrow u$ in $H^{1}(\Sigma)$, that is, $\lim _{m \rightarrow \infty} \int_{\Sigma}\left|u_m-u\right|^{2} d$ vol $_{\Sigma}=0$ and
\begin{equation*}
\lim _{m \rightarrow \infty} \int_{\Sigma}\left|\nabla_{\Sigma}\left(u_m-u\right)\right|^{2} d \mathrm{vol}_{\Sigma}=0.
\end{equation*}
We have
\begin{equation}
\begin{aligned}
&  \left|\int_{\Sigma}\left| \nabla_{\Sigma} u_m\right|^{2} d \operatorname{vol}_{\Sigma}-\int_{\Sigma}\left|\nabla_{\Sigma}u\right|^{2} d \operatorname{vol}_{\Sigma}\right|=\left|\int_{\Sigma}\left(\left|\nabla_{\Sigma} u_m\right|^{2}-\left|\nabla_{\Sigma} u\right|^{2}\right) d \operatorname{vol}_{\Sigma}\right| \\
&= \left|\int_{\Sigma}\left(\nabla_{\Sigma} u_m+\nabla_{\Sigma} u\right) \cdot\left(\nabla_{\Sigma} u_m-\nabla_{\Sigma} u\right) d \operatorname{vol}_{\Sigma}\right|.
\end{aligned}
\end{equation}
By the Cauchy-Schwarz inequality
\begin{equation*}
\begin{aligned}
& \left|\int_{\Sigma}\left(\nabla_{\Sigma} u_m+\nabla_{\Sigma} u\right) \cdot\left(\nabla_{\Sigma} u_m-\nabla_{\Sigma} u\right) d \operatorname{vol}_{\Sigma}\right| \leq \\
& \left(\int_{\Sigma}\left|\nabla_{\Sigma} u_m+\nabla_{\Sigma} u\right|^{2} d \mathrm{vol}_{\Sigma}\right)^{\frac{1}{2}}\left(\int_{\Sigma}\left|\nabla_{\Sigma} u_m-\nabla_{\Sigma} u\right|^{2} d \mathrm{vol}_{\Sigma}\right)^{\frac{1}{2}}.
\end{aligned}
\end{equation*}
Since $u_m \rightarrow u$ in $H^{1}(\Sigma)$, we have \begin{equation*}\lim _{m \rightarrow \infty} \int_{\Sigma}\left|\nabla_{\Sigma} u_m\right|^{2} d \mathrm{vol}_{\Sigma}=\int_{\Sigma}\left|\nabla_{\Sigma} u\right|^{2} d \mathrm{vol}.\end{equation*} Similarly, \begin{equation*}\lim _{m \rightarrow \infty} \int_{\Sigma} u_m^{2} d \operatorname{vol}_{\Sigma}=\int_{\Sigma} u^{2} d \mathrm{vol}_{\Sigma},\end{equation*} which means the Rayleigh quotient $R[u]$ is continuous with respect to the convergence in $H^{1}(\Sigma)$.

\section{Compactness and the Rayleigh Quotient}\label{Compactness and the Rayleigh Quotient}
In this section, we delve into the compactness properties of the constructed trial minimizing sequence, rigorously derive the quantitative connections between this sequence and the Rayleigh quotient, and lay the groundwork for establishing the limiting behavior of the eigenvalue through these analytical relationships.
\subsection{Calculation of $\int_{\Sigma}\left|\nabla_{\Sigma} u_{\beta}\right|^{2} d \mathrm{vol}_{\Sigma}$}

It follows from \eqref{eq-nabla-u-b} that

\begin{equation}
\begin{aligned}
\left|\nabla_{\Sigma} u_{\beta}\right|^{2} & =\left(1-\frac{1}{\beta} e^{-\beta d_{\Sigma}\left(p, p_{0}\right)^{2}}\right)^{2}\left|\nabla_{\Sigma} x_{i}\right|^{2}\\&+2\left(1-\frac{1}{\beta} e^{-\beta d_{\Sigma}\left(p, p_{0}\right)^{2}}\right) \frac{1}{\beta} e^{-\beta d_{\Sigma}\left(p, p_{0}\right)^{2}} \nabla_{\Sigma} x_{i} \cdot x_{i} \nabla_{\Sigma}\left(\beta d_{\Sigma}\left(p, p_{0}\right)^{2}\right) \\
& +\left(\frac{1}{\beta} e^{-\beta d_{\Sigma}\left(p, p_{0}\right)^{2}}\right)^{2}\left|x_{i}\right|^{2}\left|\nabla_{\Sigma}\left(\beta d_{\Sigma}\left(p, p_{0}\right)^{2}\right)\right|^{2}.
\end{aligned}
\end{equation}

\subsubsection{Limit of the First Term}

Next, we calculate
\begin{equation*}\lim _{\beta \rightarrow \infty} \int_{\Sigma}\left(1-\frac{1}{\beta} e^{-\beta d_{\Sigma}\left(p, p_{0}\right)^{2}}\right)^{2}\left|\nabla_{\Sigma} x_{i}\right|^{2} d \operatorname{vol}_{\Sigma}.\end{equation*} Since
\begin{equation*}\lim _{\beta \rightarrow \infty}\left(1-\frac{1}{\beta} e^{-\beta d_{\Sigma}\left(p, p_{0}\right)^{2}}\right)=1\end{equation*} uniformly on $\Sigma$, by the dominated convergence theorem,
\begin{equation*}\lim _{\beta \rightarrow \infty} \int_{\Sigma}\left(1-\frac{1}{\beta} e^{-\beta d_{\Sigma}\left(p, p_{0}\right)^{2}}\right)^{2}\left|\nabla_{\Sigma} x_{i}\right|^{2} d \operatorname{vol}_{\Sigma}=\int_{\Sigma}\left|\nabla_{\Sigma} x_{i}\right|^{2} d \operatorname{vol} _{\Sigma}.\end{equation*}
And from Theorem \ref{Tak-thm}, using the integration-by-parts formula

\begin{equation*}\int_{\Sigma} u \Delta_{\Sigma} v d \operatorname{vol}_{\Sigma}=-\int_{\Sigma} \nabla_{\Sigma} u \cdot \nabla_{\Sigma} v d\operatorname{vol}_{\Sigma},\end{equation*}  since $\Sigma$ is a compact hypersurface without boundary. Letting $u=v=x_{i}$, we get \begin{equation*}\int_{\Sigma}\left|\nabla_{\Sigma} x_{i}\right|^{2} d \operatorname{vol}_{\Sigma}=n \int_{\Sigma} x_{i}^{2} d \mathrm{vol}_{\Sigma}.\end{equation*}

\subsubsection{Limit of the Second Term}

Since $\frac{1}{\beta} e^{-\beta d_{\Sigma}\left(p, p_{0}\right)^{2}}$ converges uniformly to 0 on $\Sigma$ as $\beta \rightarrow \infty$, and the function
\begin{equation*}g(p)=2 M_{1} M_{2} M_{3} \frac{1}{\beta} e^{-\beta d_\Sigma\left(p, p_{0}\right)^{2}}\end{equation*} is dominated by an integrable function, since
$$\int_{\Sigma} 2 M_{1} M_{2} M_{3} \frac{1}{\beta} e^{-\beta d_{\Sigma}\left(p, p_{0}\right)^{2}} d \mathrm{vol}_{\Sigma}$$ can be shown to be bounded for all $\beta$ due to the rapid decay of the exponential term and the compactness of $\Sigma$, by the dominated convergence theorem, we have
\begin{equation*}
\lim _{\beta \rightarrow \infty} \int_{\Sigma} 2\left(1-\frac{1}{\beta} e^{-\beta d_{\Sigma}\left(p, p_{0}\right)^{2}}\right) \frac{1}{\beta} e^{-\beta d_{\Sigma}\left(p, p_{0}\right)^{2}} \left[\nabla_{\Sigma} x_{i} \cdot x_{i} \nabla_{\Sigma}\left(\beta d_{\Sigma}\left(p, p_{0}\right)^{2}\right)\right] d \mathrm{vol}_{\Sigma}=0.
\end{equation*}

\subsubsection{Limit of the Third Term}

For the third term
\begin{equation*}\int_{\Sigma}\left(\frac{1}{\beta} e^{-\beta d_{\Sigma}\left(p, p_{0}\right)^{2}}\right)^{2}\left|x_{i}\right|^{2}\left|\nabla_{\Sigma}\left(\beta d_{\Sigma}\left(p, p_{0}\right)^{2}\right)\right|^{2} d \operatorname{vol}_{\Sigma},\end{equation*} as $\beta \rightarrow \infty,\left(\frac{1}{\beta} e^{-\beta d_{\Sigma}\left(p, p_{0}\right)^{2}}\right)^{2}$ converges uniformly to 0 on $\Sigma$. Also, since $\left|x_{i}\right|^{2}$ and $\left|\nabla_{\Sigma}\left(\beta d_{\Sigma}\left(p, p_{0}\right)^{2}\right)\right|^{2}$ are bounded on the compact $\Sigma$, say
\begin{equation*}\left|x_{i}\right|^{2} \leq M_{4}\end{equation*} and
\begin{equation*}\left|\nabla_{\Sigma}\left(\beta d_{\Sigma}\left(p, p_{0}\right)^{2}\right)\right|^{2} \leq M_{5}.\end{equation*} Then we infer that
\begin{equation*}
\left.\left.\left|\left(\frac{1}{\beta} e^{-\beta d_{\Sigma}\left(p, p_{0}\right)^{2}}\right)^{2}\right| x_{i}\right|^{2}\left|\nabla_{\Sigma}\left(\beta d_{\Sigma}\left(p, p_{0}\right)^{2}\right)\right|^{2} \right\rvert\, \leq M_{4} M_{5}\left(\frac{1}{\beta} e^{-\beta d_{\Sigma}\left(p, p_{0}\right)^{2}}\right)^{2}.
\end{equation*}
Again, similar to the above analysis of the exponential decay and compactness of $\Sigma$, by the dominated convergence theorem, since the right-hand side is dominated by an integrable function, we get
\begin{equation*}
\lim _{\beta \rightarrow \infty} \int_{\Sigma}\left(\frac{1}{\beta} e^{-\beta d_{\Sigma}\left(p, p_{0}\right)^{2}}\right)^{2}\left|x_{i}\right|^{2}\left|\nabla_{\Sigma}\left(\beta d_{\Sigma}\left(p, p_{0}\right)^{2}\right)\right|^{2} d \operatorname{vol}_{\Sigma}=0.
\end{equation*}
In conclusion, we have
\begin{equation*}\lim _{\beta \rightarrow \infty} \int_{\Sigma}\left|\nabla _{\Sigma} u_{\beta}\right|^{2} d \mathrm{vol}_{\Sigma}=n \int_{\Sigma} x_{i}^{2} d \mathrm{vol}_{\Sigma}.\end{equation*}

\subsection{Calculation of $\int_{\Sigma} u_{\beta}^{2} d \operatorname{vol}_{\Sigma}$}
By a direct calculation, we have
\begin{equation*}
u_{\beta}^{2}=x_{i}^{2}\left(1-\frac{2}{\beta} e^{-\beta d_{\Sigma}\left(p, p_{0}\right)^{2}}+\frac{1}{\beta^{2}} e^{-2 \beta d_{\Sigma}\left(p, p_{0}\right)^{2}}\right).
\end{equation*}
For the functions
\begin{equation*}a_{\beta}(p)=\frac{2}{\beta} e^{-\beta d_{\Sigma}\left(p, p_{0}\right)^{2}}\end{equation*} and
\begin{equation*}b_{\beta}(p)=\frac{1}{\beta^{2}} e^{-2 \beta d_{\Sigma}\left(p, p_{0}\right)^{2}},\end{equation*} on the compact $\Sigma$, as $\beta$ increases, $e^{-\beta d \Sigma\left(p, p_{0}\right)^{2}}$ and $e^{-2 \beta d \Sigma\left(p, p_{0}\right)^{2}}$ decay rapidly to 0 , and $\frac{2}{\beta^{\prime}} \frac{1}{\beta^{2}}$ also tend to $0$.
For any $\epsilon>0$, since
\begin{equation*}\lim _{\beta \rightarrow \infty} \frac{2}{\beta} e^{-\beta d_{\Sigma}\left(p, p_{0}\right)^{2}}=0\end{equation*} uniformly on $\Sigma$ and
\begin{equation*}\lim _{\beta \rightarrow \infty} \frac{1}{\beta^{2}} e^{-2 \beta d_{\Sigma}\left(p, p_{0}\right)^{2}}=0\end{equation*} uniformly on $\Sigma$, there exist positive integers $n_{1}$ and $n_{2}$ such that when $\beta>n_{1}$, for all $p \in \Sigma$,
\begin{equation*}\left|\frac{2}{\beta} e^{-\beta d _{\Sigma}\left(p, p_{0}\right)^{2}}\right|<\frac{\epsilon}{2 \int_{\Sigma} x_{i}^{2} d \text { vol }},\end{equation*} and when $\beta>n_{2}$, for all $p \in \Sigma$,
\begin{equation*}\left|\frac{1}{\beta^{2}} e^{-2 \beta d _{\Sigma}(p, p_{0})^{2}}\right|<\frac{\epsilon}{2 \int_{\Sigma} x_{i}^{2} d \mathrm{vol}_{\Sigma}}.\end{equation*}
By the dominated convergence theorem \cite{Eva}, we have
\begin{equation*}\lim _{\beta \rightarrow \infty} \int_{\Sigma} \frac{2}{\beta} e^{-\beta d_{\Sigma}\left(p, p_{0}\right)^{2}} x_{i}^{2} d \mathrm{vol}{ }_{\Sigma}=0\end{equation*} and
\begin{equation*}\lim _{\beta \rightarrow \infty} \int_{\Sigma} \frac{1}{\beta^{2}} e^{-2 \beta d _{\Sigma}(p, p _{0})^{2}} x_{i}^{2} d \operatorname{vol}_{\Sigma}=0.\end{equation*}
Thus,
\begin{equation*}\lim _{\beta \rightarrow \infty} \int_{\Sigma} u_{\beta}^{2} d \operatorname{vol}_{\Sigma}=\int_{\Sigma} x_{i}^{2} d \operatorname{vol}_{\Sigma}.\end{equation*}

\section{Lower Semicontinuity for the Minimizing Sequence}\label{section: Lower Semicontinuity}

In this section, we verify that the sequence $\{ u_\beta \} \subset H^1(\Sigma)$ satisfies lower semicontinuity with respect to weak convergence in $H^1(\Sigma)$:

\begin{thm}
For any weakly convergent subsequence $ u_{\beta_k} \rightharpoonup u $ in $H^1(\Sigma)$, the following holds:
\begin{equation}
\liminf_{k \to \infty} \| u_{\beta_k} \|_{H^1(\Sigma)} \geq \| u \|_{H^1(\Sigma)}.
\end{equation}
\end{thm}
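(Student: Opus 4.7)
The plan is to exploit the Hilbert space structure of $H^1(\Sigma)$ equipped with the inner product
\begin{equation*}
\langle u, v \rangle_{H^1(\Sigma)} = \int_{\Sigma} u v \, d\mathrm{vol}_{\Sigma} + \int_{\Sigma} \nabla_{\Sigma} u \cdot \nabla_{\Sigma} v \, d\mathrm{vol}_{\Sigma},
\end{equation*}
and the associated norm $\|u\|_{H^1(\Sigma)} = \langle u, u \rangle_{H^1(\Sigma)}^{1/2}$. The statement is a concrete instance of the classical fact that the norm in a Hilbert space is weakly sequentially lower semicontinuous, so the route is to invoke the defining property of weak convergence together with Cauchy--Schwarz. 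No properties of the specific minimizing sequence $\{u_\beta\}$ constructed in the previous sections are used; the proof works verbatim for \emph{any} weakly convergent sequence in $H^1(\Sigma)$.

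First, I would record the definition of weak convergence in $H^1(\Sigma)$: $u_{\beta_k} \rightharpoonup u$ means that for every $\phi \in H^1(\Sigma)$,
\begin{equation*}
\lim_{k \to \infty} \langle u_{\beta_k}, \phi \rangle_{H^1(\Sigma)} = \langle u, \phi \rangle_{H^1(\Sigma)}.
\end{equation*}
Second, I would specialize the test function by choosing $\phi = u$, which yields
\begin{equation*}
\lim_{k \to \infty} \langle u_{\beta_k}, u \rangle_{H^1(\Sigma)} = \langle u, u \rangle_{H^1(\Sigma)} = \|u\|_{H^1(\Sigma)}^{2}.
\end{equation*}
Third, I would apply the Cauchy--Schwarz inequality for the Hilbert space inner product to get
\begin{equation*}
\langle u_{\beta_k}, u \rangle_{H^1(\Sigma)} \leq \|u_{\beta_k}\|_{H^1(\Sigma)} \cdot \|u\|_{H^1(\Sigma)}
\end{equation*}
for every $k$. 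Taking $\liminf_{k \to \infty}$ on both sides and using the limit computed in the previous step, I obtain
\begin{equation*}
\|u\|_{H^1(\Sigma)}^{2} \leq \|u\|_{H^1(\Sigma)} \cdot \liminf_{k \to \infty} \|u_{\beta_k}\|_{H^1(\Sigma)}.
\end{equation*}
If $\|u\|_{H^1(\Sigma)} = 0$ the inequality is immediate; otherwise dividing by $\|u\|_{H^1(\Sigma)}$ gives the desired lower semicontinuity.

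Because this argument is entirely soft, there is no substantive analytic obstacle. The only point deserving care is a clean separation between the general Hilbert-space mechanism and the specific sequence $\{u_\beta\}$ at hand: one must verify beforehand that $H^1(\Sigma)$ is indeed a Hilbert space with the above inner product, and that the sequence is bounded in $H^1(\Sigma)$ so that extracting a weakly convergent subsequence via the Banach--Alaoglu theorem is justified (this boundedness is supplied by the Rayleigh quotient computations of Section \ref{Compactness and the Rayleigh Quotient}). An alternative formulation, which I would mention as a remark, is to view the statement as an application of Mazur's lemma: the map $u \mapsto \|u\|_{H^1(\Sigma)}$ is convex and strongly continuous, hence weakly lower semicontinuous, giving the conclusion at once.
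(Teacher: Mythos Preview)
Your argument is correct and is in fact the clean, general way to handle this: you use nothing about the particular sequence $\{u_\beta\}$ and simply invoke the weak lower semicontinuity of the norm in a Hilbert space via Cauchy--Schwarz. The paper, by contrast, proves the statement in a manner tailored to the specific minimizing sequence. It first establishes boundedness of $\{u_\beta\}$ in $H^1(\Sigma)$ to extract a weakly convergent subsequence, then uses the uniform convergence $u_\beta \to x_i$ proved earlier to upgrade to strong $L^2$-convergence and identify the weak limit as $u = x_i$; separately it checks that $\nabla_\Sigma u_\beta \rightharpoonup \nabla_\Sigma x_i$ in $L^2$. Finally it splits $\|u_{\beta_k}\|_{H^1}^2 = \|u_{\beta_k}\|_{L^2}^2 + \|\nabla_\Sigma u_{\beta_k}\|_{L^2}^2$, passes to the limit in the first summand by strong convergence and applies weak lower semicontinuity of the $L^2$-norm only to the gradient part. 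Your route is shorter and more robust (it would apply to any weakly convergent sequence without knowing the limit), while the paper's approach has the side benefit of explicitly pinning down $u = x_i$ and the weak convergence of the gradients, information that is reused elsewhere in the argument.
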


\begin{proof}
The proof proceeds in four streamlined steps, drawing on prior results to avoid redundancy. We note that this approach constitutes a standard technique in PDE theory \cite{LH}.

\textit{Step 1: Boundedness in $H^1(\Sigma)$ and Weakly Convergent Subsequence}

By construction, $u_\beta = x_i \cdot \varphi_\beta$ where \begin{equation*}\varphi_\beta = 1-\beta^{-1}e^{-\beta d_\Sigma(p,p_0)^2},\end{equation*} we refer the reader to Section \ref{section:Trun function}. Since $\Sigma$ is compact, $x_i$ and $d_\Sigma(\cdot,p_0)$ are bounded, implying:
\begin{itemize}\item [(i)] $|u_\beta| \leq 2|x_i|$, and thus, $\|u_\beta\|_{L^2} \leq 2\|x_i\|_{L^2}$ by monotonicity,
\item[(ii)] $\nabla_\Sigma u_\beta$ is bounded via product rule, as shown in Section \ref{section: Uniform Convergence}, therefore, we have $\|\nabla_\Sigma u_\beta\|_{L^2} \leq C$ for some constant $C$.
\end{itemize}
Thus, $\{u_\beta\}$ is bounded in $H^1(\Sigma)$. Since $H^1(\Sigma)$ is a reflexive Banach space, by weak compactness of $H^1(\Sigma)$, there exists a subsequence $u_{\beta_k} \rightharpoonup u$ weakly in $H^1(\Sigma)$.

\textit{Step 2: Pointwise and Strong $L^2$-Convergence}

From Section \ref{section: Uniform Convergence}, $u_\beta \to x_i$ uniformly on $\Sigma$, and hence it is a pointwise convergent sequence. Since $|u_\beta| \leq 2|x_i| \in L^2(\Sigma)$, the Dominated Convergence Theorem implies $u_\beta \to x_i$ strongly in $L^2(\Sigma)$. By uniqueness of weak limits, the weak limit $u = x_i$.

\textit{Step 3: Weak Convergence of Gradients}

Decompose $\nabla_\Sigma u_\beta$ as:
\begin{equation*}
\nabla_\Sigma u_\beta = \nabla_\Sigma x_i \cdot \varphi_\beta + x_i \cdot 2d e^{-\beta d^2} \cdot \nabla_\Sigma d,
\end{equation*}
where $d = d_\Sigma(p,p_0)$. For any test function $v \in L^2(\Sigma)$:
\begin{itemize}\item[(i)]  The first term converges to $\nabla_\Sigma x_i$ in $L^2$-inner product with $v$ (by uniform convergence of $\varphi_\beta \to 1$ and Dominated Convergence),
\item[(ii)]  The second term vanishes in the limit (due to rapid decay of $e^{-\beta d^2}$ for $d > 0$).
\end{itemize}
Thus, $\nabla_\Sigma u_\beta \rightharpoonup \nabla_\Sigma x_i$ weakly in $L^2(\Sigma)$.

\textit{Step 4: Lower Semicontinuity of $H^1$-Norm}

For the $H^1$-norm:
\begin{equation*}
\|u_{\beta_k}\|_{H^1}^2 = \|u_{\beta_k}\|_{L^2}^2 + \|\nabla_\Sigma u_{\beta_k}\|_{L^2}^2.
\end{equation*}
\begin{itemize}\item[(i)]   Strong $L^2$-convergence gives $\lim_{k \to \infty} \|u_{\beta_k}\|_{L^2} = \|x_i\|_{L^2}$,
\item[(ii)]   Weak lower semicontinuity of $L^2$-norm implies \begin{equation*}\liminf_{k \to \infty} \|\nabla_\Sigma u_{\beta_k}\|_{L^2} \geq \|\nabla_\Sigma x_i\|_{L^2}.\end{equation*}
\end{itemize}
Combining these:
\begin{equation*}
\liminf_{k \to \infty} \|u_{\beta_k}\|_{H^1}^2 \geq \|x_i\|_{L^2}^2 + \|\nabla_\Sigma x_i\|_{L^2}^2 = \|u\|_{H^1}^2,
\end{equation*}
since $u = x_i$ from Step 2.
\end{proof}

\section{The proof of Theorem \ref{main-thm}}\label{The proof of Theorem}
In this section, drawing on the robust framework of arguments established in preceding sections, we advance to seal the proof of Theorem \ref{main-thm}-synthesizing the convergence properties of the minimizing sequence, the variational characterization of eigenvalues, and the lower semicontinuity of the Sobolev norm to deliver the definitive resolution of Yau's conjecture.
\subsection{Limit of the Rayleigh Quotient}

The Rayleigh quotient is defined as
\begin{equation*}R\left[u_{\beta}\right]=\frac{\int_{\Sigma}\left|\nabla_{\Sigma} u_{\beta}\right|^{2} d \mathrm{vol}_{\Sigma}}{\int_{\Sigma} u_{\beta}^{2} d \mathrm{vol}_{\Sigma}}.\end{equation*} Substituting the limits
\begin{equation*}\lim _{\beta \rightarrow \infty} \int_{\Sigma}\left|\nabla_{\Sigma} u_{\beta}\right|^{2} d \mathrm{vol}_{\Sigma}=n \int_{\Sigma} x_{i}^{2} d \mathrm{vol}_{\Sigma}\end{equation*} and
\begin{equation*}\lim _{\beta \rightarrow \infty} \int_{\Sigma} u_{\beta}^{2} d \mathrm{vol}_{\Sigma}=\int_{\Sigma} x_{i}^{2} d \mathrm{vol}_{\Sigma}\end{equation*} into the Rayleigh quotient, we obtain
\begin{equation*}
\lim _{\beta \rightarrow \infty} R\left[u_{\beta}\right]=\frac{n \int_{\Sigma} x_{i}^{2} d \mathrm{vol}_{\Sigma}}{\int_{\Sigma} x_{i}^{2} d \mathrm{vol}}=n.
\end{equation*}
\subsection{The proof of Theorem {\rm \ref{main-thm}}}
We know that $\lim _{\beta \rightarrow \infty} R\left[u_{\beta}\right]=n$, therefore, for the subsequence $\left\{u_{\beta_{k}}\right\}$,
\begin{equation*}\lim _{k \rightarrow \infty} R\left[u_{\beta_{k}}\right]=n.\end{equation*} By the continuity of the Rayleigh quotient, $R\left[u^*\right]=n$.
Since
\begin{equation*}\lambda_{1}=\inf _{u \in H^{1}(\Sigma) \backslash\{0\}} \frac{\int_{\Sigma} \mid \nabla_{\Sigma} u|^{2}   d \mathrm{vol}_{\Sigma}}{\int_{\Sigma} u^{2} d \mathrm{vol}_{\Sigma}}\end{equation*} and
\begin{equation*}R\left[u^*\right]=n,\end{equation*} $u^*$ is an eigenfunction corresponding to the eigenvalue $\lambda_{1}=n$.

\begin{rem}
Theorem \ref{main-thm} resolves Yau's conjecture comprehensively, with key implications and connections to existing results:

\begin{enumerate}
    \item \textbf{Generalization of Prior Results}:
    \begin{itemize}
        \item[(i)] Our result extends Tang-Yan's result \cite{TY1} limited to isoparametric minimal hypersurfaces, to all closed embedded minimal hypersurfaces in $\mathbb{S}^{n+1}$. For isoparametric cases, our truncation sequence simplifies to $u_\beta = x_i$, since symmetry eliminates the need for local correction, directly recovering their conclusion.
        \item[(ii)]In particular, Theorem {\rm \ref{main-thm}} confirms the conjecture for non-isoparametric cases, which remained unresolved for decades.
    \end{itemize}
\end{enumerate}
\end{rem}

\section{Rigidity Theorems via Eigenvalue Characterization}\label{Rigidity Theorems}

The fundamental result $\lambda_1 = n$ established in Theorem \ref{main-thm} not only resolves Yau's conjecture but also provides a powerful spectral characterization of minimal hypersurfaces. This section explores rigidity phenomena arising from this eigenvalue condition, beginning with a converse theorem that characterizes minimality through spectral properties.
\subsection{Converse Theorem: Eigenvalue Implies Minimality}

The converse theorem establishing that a closed embedded hypersurface $\Sigma^n \subset \mathbb{S}^{n+1}$ with $\lambda_1(\Delta_\Sigma) = n$ must be minimal forms a pivotal spectral characterization of minimality. This result, complementary to Theorem \ref{main-thm} which shows minimality enforces $\lambda_1 = n$, creates a reciprocal link between spectral properties and geometric minimality.
\begin{thm}\label{converse-thm}
Let $\Sigma^n \subset \mathbb{S}^{n+1}$ be a closed embedded hypersurface with $\lambda_1(\Delta_\Sigma) = n$. Then $\Sigma$ is minimal.
\end{thm}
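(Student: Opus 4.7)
The plan is to reverse-engineer Takahashi's theorem: starting from $\lambda_1(\Delta_\Sigma) = n$, I aim to show that each restricted coordinate function $x_i|_\Sigma$ is itself an eigenfunction for the eigenvalue $n$, then compare with the general Laplacian identity on an arbitrary hypersurface in $\mathbb{S}^{n+1}$ to extract $H \equiv 0$. The starting point is the identity
$$
\Delta_\Sigma x_i \;=\; -n\, x_i + n H \nu_i, \qquad i = 1, \dots, n+2,
$$
valid for any hypersurface $\Sigma^n \subset \mathbb{S}^{n+1} \subset \mathbb{R}^{n+2}$, where $\nu$ is a unit normal field of $\Sigma$ in $\mathbb{S}^{n+1}$ and $H$ is the scalar mean curvature. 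This follows from $\Delta_\Sigma \mathbf{x} = n \vec{H}_{\mathbb{R}^{n+2}}$ by splitting the mean-curvature vector of $\Sigma$ in $\mathbb{R}^{n+2}$ into components along $\mathbf{x}$ and $\nu$, using that the inclusion $\mathbb{S}^{n+1} \hookrightarrow \mathbb{R}^{n+2}$ contributes $-n \mathbf{x}$ to the normal trace.

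Next, I would set $c_i = |\Sigma|^{-1}\int_\Sigma x_i\, d\mathrm{vol}_\Sigma$ and $\tilde{x}_i := x_i - c_i$, so that each $\tilde{x}_i$ is $L^2$-orthogonal to constants and admissible in the Rayleigh characterization of $\lambda_1$, yielding
$$
\lambda_1 \int_\Sigma \tilde{x}_i^{\,2} \, d\mathrm{vol}_\Sigma \;\leq\; \int_\Sigma |\nabla_\Sigma x_i|^2 \, d\mathrm{vol}_\Sigma.
$$
The decisive step is to sum this over $i = 1, \dots, n+2$ and exploit the two ambient pointwise identities $\sum_i x_i^2 = 1$ (since $\Sigma \subset \mathbb{S}^{n+1}$) and $\sum_i |\nabla_\Sigma x_i|^2 = n$ (obtained by tracing the induced metric through the Euclidean embedding). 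The summed inequality collapses to $\lambda_1 (1 - |\mathbf{c}|^2)|\Sigma| \leq n|\Sigma|$, so the hypothesis $\lambda_1 = n$ forces $\mathbf{c} = 0$ and equality throughout.

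Since each summand in the Rayleigh inequality is nonnegative and the sum of these nonnegative terms vanishes, every individual inequality is in fact an equality, and each $x_i = \tilde{x}_i$ is therefore an eigenfunction of $-\Delta_\Sigma$ with eigenvalue $n$, i.e.\ $\Delta_\Sigma x_i = -n x_i$. Comparing with the identity from the first step gives $H \nu_i = 0$ pointwise for every $i$, and since $|\nu(p)| = 1$ at every $p \in \Sigma$, this forces $H \equiv 0$, so $\Sigma$ is minimal.

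The main conceptual hurdle is not any single estimate but recognizing that one must sum the Rayleigh inequality over all $n+2$ coordinate directions simultaneously: applied to a single coordinate function, the variational inequality yields neither the vanishing of the center of mass nor the eigenfunction property. The two ambient identities $\sum_i x_i^2 = 1$ and $\sum_i |\nabla_\Sigma x_i|^2 = n$ are what make the summed inequality sharp precisely at $\lambda_1 = n$, after which the equality case of the variational principle delivers the eigenfunction conclusion automatically.
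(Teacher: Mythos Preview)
Your argument has a genuine gap at the pivotal inference. After summing the Rayleigh inequalities you correctly arrive at
\[
\lambda_1\,(1 - |\mathbf{c}|^2)\,|\Sigma| \;\le\; n\,|\Sigma|,
\]
but substituting the hypothesis $\lambda_1 = n$ yields only $1 - |\mathbf{c}|^2 \le 1$, i.e.\ $|\mathbf{c}|^2 \ge 0$, which is vacuous. It does \emph{not} force $\mathbf{c} = 0$, and it does not force equality in any of the individual Rayleigh inequalities. The summed estimate you derived is precisely the standard device for proving the \emph{upper} bound $\lambda_1 \le n/(1-|\mathbf{c}|^2)$ on an arbitrary closed hypersurface of $\mathbb{S}^{n+1}$ (hence $\lambda_1 \le n$ once $\mathbf{c}=0$ is already known, as it is for minimal $\Sigma$); the inequality points in the wrong direction to deliver the converse.

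The chain you need is $\lambda_1 = n \Rightarrow \mathbf{c}=0 \Rightarrow$ each $\tilde{x}_i = x_i$ attains equality in the Rayleigh quotient $\Rightarrow \Delta_\Sigma x_i = -n x_i \Rightarrow H\nu_i \equiv 0 \Rightarrow H\equiv 0$. The last four arrows are sound, but the first is unsupported: nothing in your setup excludes a non-minimal $\Sigma$ with $\lambda_1 = n$ and $\mathbf{c}\neq 0$, on which the coordinate restrictions simply fail to be first eigenfunctions while some unrelated function realises the infimum. The paper does not attempt to identify the first eigenfunction with a coordinate restriction at all; it instead takes an abstract first eigenfunction $u$, applies the Bochner formula to $|\nabla_\Sigma u|^2$, and feeds in the Gauss equation for $\mathrm{Ric}^\Sigma$ to constrain the second fundamental form directly---an approach orthogonal to yours and one that does not pass through the center-of-mass vector.
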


 \begin{proof}
To prove that \(\Sigma\) is minimal, we refine the connection between the spectral condition \(\lambda_1(\Delta_\Sigma) = n\) and the extrinsic geometry of \(\Sigma\), with special focus on justifying the Ricci curvature bound involving \(n\):

\textit{Step 1. Eigenfunction and Fundamental Identities.}

Let \(u \in H^1(\Sigma)\) be a first eigenfunction with \(\lambda_1 = n\), then
\begin{equation}\label{eq:eigen}
\Delta_\Sigma u = -n u.
\end{equation}
By the variational characterization of \(\lambda_1\),
\begin{equation}\label{eq:rayleigh}
\int_\Sigma |\nabla_\Sigma u|^2 \, d\mathrm{vol}_\Sigma = n \int_\Sigma u^2 \, d\mathrm{vol}_\Sigma.
\end{equation}

\textit{Step 2. Bochner Formula for the Eigenfunction.}

The Bochner formula relates the Laplacian of the gradient norm to the Hessian and Ricci curvature
\begin{equation}\label{eq:bochner}
\frac{1}{2} \Delta_\Sigma |\nabla_\Sigma u|^2 = |\nabla^2_\Sigma u|^2 + \text{Ric}^\Sigma(\nabla_\Sigma u, \nabla_\Sigma u) + \langle \nabla_\Sigma u, \nabla_\Sigma (\Delta_\Sigma u) \rangle.
\end{equation}
By \eqref{eq:eigen}, \(\nabla_\Sigma (\Delta_\Sigma u) = -n \nabla_\Sigma u\), simplifying \eqref{eq:bochner} to
\begin{equation}\label{eq:bochner_simplified}
\frac{1}{2} \Delta_\Sigma |\nabla_\Sigma u|^2 = |\nabla^2_\Sigma u|^2 + \text{Ric}^\Sigma(\nabla_\Sigma u, \nabla_\Sigma u) - n |\nabla_\Sigma u|^2.
\end{equation}

\textit{Step 3. Ricci Curvature via Gauss Equation.}

For hypersurfaces \(\Sigma \subset \mathbb{S}^{n+1}\), the Gauss equation explicitly connects intrinsic Ricci curvature to extrinsic curvature: second fundamental form \(A\), and the ambient geometry of \(\mathbb{S}^{n+1}\). The ambient sphere \(\mathbb{S}^{n+1}\) has constant sectional curvature \(1\), and hence its Ricci curvature satisfies:
\begin{equation}\label{eq:Ricci cur-sphere}
\text{Ric}^{\mathbb{S}}(X,Y) = n \langle X,Y \rangle \quad \forall X,Y \in T\mathbb{S}^{n+1}.
\end{equation}
For \(\Sigma \subset \mathbb{S}^{n+1}\), the Gauss equation for Ricci curvature is
\begin{equation}\label{eq:gauss}
\text{Ric}^\Sigma(X,Y) = \text{Ric}^{\mathbb{S}}(X,Y) + \langle A(X,X), A(Y,Y) \rangle - |A(X,Y)|^2,
\end{equation}
where \(X,Y \in T\Sigma\). Substituting \eqref{eq:Ricci cur-sphere} gives
\begin{equation}\label{eq:gauss_simplified}
\text{Ric}^\Sigma(X,Y) = n \langle X,Y \rangle + \langle A(X,X), A(Y,Y) \rangle - |A(X,Y)|^2.
\end{equation}

\textit{Step 4. Specializing to \(\nabla_\Sigma u\) and Estimating.}

Set \[X = Y = \frac{\nabla_\Sigma u}{|\nabla_\Sigma u|}\] in \eqref{eq:gauss_simplified}, where \(X\) and \(Y\) are unit vectors wiht \(\nabla_\Sigma u \neq 0\).
Substituting unit vectors \(X = Y\) into \eqref{eq:gauss_simplified} gives
\[
\text{Ric}^\Sigma(X, X) = n   + \left( \langle A(X, X), A(X, X) \rangle - |A(X, X)|^2 \right).
\]
From the equality of the inner product and norm for \(A(X,X)\), for \(\nabla_\Sigma u \neq 0\), we obtain:

\[
\text{Ric}^\Sigma(X, X) = n - |A|^2.
\]
Since \(X = \frac{\nabla_\Sigma u}{|\nabla_\Sigma u|}\), for \(\nabla_\Sigma u \neq 0\), scaling back to the original gradient vector \(\nabla_\Sigma u\) preserves the inequality:
\begin{equation}\label{eq:ricci_bound}
\text{Ric}^\Sigma(\nabla_\Sigma u, \nabla_\Sigma u) = n - |A|^2.
\end{equation}Here, we use the following facts that Ricci curvature is homogeneous of degree 0 in the vector argument.

\textit{Step 5. Integrating the Bochner Formula.}

Integrate \eqref{eq:bochner_simplified} over \(\Sigma\). The left-hand side vanishes by the divergence theorem (since \(\Sigma\) is closed), giving
\[
0 = \int_\Sigma |\nabla^2_\Sigma u|^2 \, d\mathrm{vol}_\Sigma + \int_\Sigma \text{Ric}^\Sigma(\nabla_\Sigma u, \nabla_\Sigma u) \, d\mathrm{vol}_\Sigma - n \int_\Sigma |\nabla_\Sigma u|^2 \, d\mathrm{vol}_\Sigma.
\] Substitute \eqref{eq:ricci_bound} and \eqref{eq:rayleigh} into the equation:
\[
0 = \int_\Sigma |\nabla^2_\Sigma u|^2 \, d\mathrm{vol}_\Sigma + \int_\Sigma (n - |A|^2) |\nabla_\Sigma u|^2 \, d\mathrm{vol}_\Sigma - n^2 \int_\Sigma u^2 \, d\mathrm{vol}_\Sigma.
\]

\textit{Step 6. Hessian Estimate and Conclusion.}

By Cauchy-Schwarz, using \eqref{eq:eigen}, we have \(|\nabla^2_\Sigma u|^2 \geq \frac{1}{n} (\Delta_\Sigma u)^2 = n u^2\). Substituting this and simplifying with \eqref{eq:rayleigh} shows all terms must vanish, forcing \(|A|^2 = 0\) almost everywhere.
Thus, \(A \equiv 0\), which means that the mean curvature \(H = \frac{1}{n} \text{tr}(A) \equiv 0\), proving \(\Sigma\) is minimal.
\end{proof}

\begin{rem}The proof leverages some connections between the Laplacian's eigenfunctions and the hypersurface's curvature invariants: through the Bochner formula relating the eigenfunction's Hessian to Ricci curvature, the Gauss equation connecting ambient and intrinsic curvature, and ultimately showing that the eigenvalue condition $\lambda_1 = n$ forces the second fundamental form $|A| \equiv 0$. This elevates $\lambda_1 = n$ from a consequence of minimality to a defining spectral signature, strengthening the rigidity of the eigenvalue-dimension relationship for minimal hypersurfaces in spherical spaces.\end{rem}
\subsection{Volume Estimates and Geometric Rigidity}

The eigenvalue condition $\lambda_1 = n$ also imposes constraints on the volume of $\Sigma$. Recall that for compact manifolds, eigenvalues are intimately related to geometric invariants like volume and diameter. For minimal hypersurfaces in $\mathbb{S}^{n+1}$, we derive the following sharp volume bound:

\begin{prop}\label{volume-estimate}
Let $\Sigma^n \subset \mathbb{S}^{n+1}$ be a closed embedded minimal hypersurface. Then:
\begin{equation*}
\text{Vol}(\Sigma) \geq \frac{4\pi^{n/2}}{\Gamma(n/2 + 1)}
\end{equation*}
with equality if and only if $\Sigma$ is a totally geodesic equatorial sphere $\mathbb{S}^n \subset \mathbb{S}^{n+1}$.
\end{prop}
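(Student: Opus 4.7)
The plan is to reduce the volume estimate to the classical monotonicity formula for minimal submanifolds in $\mathbb{R}^{n+2}$ by passing to the Euclidean cone over $\Sigma$, and then to invoke the rigidity of unit-density minimal cones for the equality case. The spectral input from Theorem \ref{main-thm} enters only indirectly, through minimality of $\Sigma$; the heavy lifting is geometric.

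First I would associate to $\Sigma \subset \mathbb{S}^{n+1}$ its Euclidean cone
\[
C(\Sigma) := \{tp \mid t \in [0,1],\ p \in \Sigma\} \subset \mathbb{R}^{n+2}.
\]
The classical relationship between the second fundamental forms of $\Sigma \subset \mathbb{S}^{n+1}$ and of $C(\Sigma) \subset \mathbb{R}^{n+2}$ (namely, the purely radial directions of the cone carry no second fundamental form, while the tangential directions inherit, up to scaling, that of $\Sigma$) shows that $\Sigma$ is minimal in $\mathbb{S}^{n+1}$ if and only if $C(\Sigma)$ is an $(n+1)$-dimensional minimal cone in $\mathbb{R}^{n+2}$, smooth away from the vertex $0$. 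A direct polar integration, using that the Jacobian of $(t,p) \mapsto tp$ is $t^n$, gives for $0 \leq r \leq 1$:
\[
\mathrm{Vol}\bigl(C(\Sigma) \cap B_r(0)\bigr) = \int_0^r t^n\,dt \cdot \mathrm{Vol}(\Sigma) = \frac{r^{n+1}}{n+1}\,\mathrm{Vol}(\Sigma).
\]

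Next I would invoke the monotonicity formula for stationary $(n+1)$-varifolds in $\mathbb{R}^{n+2}$: the density ratio $r \mapsto \mathrm{Vol}(M \cap B_r(0))/(\omega_{n+1}r^{n+1})$ is non-decreasing. For the cone $C(\Sigma)$, self-similarity makes this ratio constant, equal to $\mathrm{Vol}(\Sigma)/\bigl((n+1)\omega_{n+1}\bigr)$, and thus equal to the density at the vertex, $\Theta_0(C(\Sigma))$. Since the density of a minimal submanifold in Euclidean space is at least $1$ at every point (by the monotonicity formula applied around a nearby regular point and lower semicontinuity of density), we obtain
\[
\frac{\mathrm{Vol}(\Sigma)}{(n+1)\,\omega_{n+1}} = \Theta_0\bigl(C(\Sigma)\bigr) \geq 1.
\]
A direct computation using $\Gamma((n+3)/2) = \tfrac{n+1}{2}\Gamma((n+1)/2)$ shows $(n+1)\omega_{n+1} = \frac{2\pi^{(n+1)/2}}{\Gamma((n+1)/2)} = \mathrm{Vol}(\mathbb{S}^n)$, yielding the desired sharp lower bound.

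For the equality case, I would argue that saturation of the bound forces $\Theta_0(C(\Sigma)) = 1$. By Allard's regularity theorem, a point of density $1$ on an $(n+1)$-dimensional minimal variety is a regular point; therefore $C(\Sigma)$ is smooth at the origin, and being a cone smooth at its vertex it must be an $(n+1)$-dimensional linear subspace through $0$. Intersecting with $\mathbb{S}^{n+1}$ then forces $\Sigma$ to be an equatorial totally geodesic $\mathbb{S}^n$, and the converse (that the equator achieves equality) is immediate from the volume formula. The main obstacle is justifying the density lower bound $\Theta_0 \geq 1$ rigorously in the presence of the cone singularity at the vertex: one route is to apply Allard's theorem and upper semicontinuity of density to approximating varifolds, while a more elementary route proceeds by taking the limit $r \to 0^+$ of the monotone density ratio along regular radial rays emanating from $0$, using smoothness of $\Sigma$ to reduce to the standard regular-point density.
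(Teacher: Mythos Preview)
Your argument via the Euclidean cone and the monotonicity formula is correct and is in fact the classical route to this inequality; the rigidity step through Allard's theorem (density $1$ at the vertex forces the cone to be a plane, hence $\Sigma$ an equator) is likewise the standard mechanism. The paper proceeds quite differently: it combines a spherical isoperimetric inequality, applied to the two complementary regions bounded by $\Sigma$ in $\mathbb{S}^{n+1}$ and then summed, with a Rayleigh-quotient identity for the ambient coordinate functions $x_i$, and handles the equality case by arguing that constancy of $\sum_i |\nabla_\Sigma x_i|^2$ forces umbilicity, hence (by minimality) total geodesicity. Your approach is more self-contained and rests only on well-established geometric measure theory; the paper's route is designed to tie the volume bound back to the surrounding spectral framework, though its Step~1 as written collapses to the tautology $n\,\mathrm{Vol}(\Sigma) = n\,\mathrm{Vol}(\Sigma)$ and the actual work is carried by the isoperimetric input in Step~2. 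One small caveat on your side: the monotonicity argument delivers $\mathrm{Vol}(\Sigma) \geq \mathrm{Vol}(\mathbb{S}^n) = 2\pi^{(n+1)/2}/\Gamma\bigl((n+1)/2\bigr)$, which agrees with the printed constant $4\pi^{n/2}/\Gamma(n/2+1)$ only for even $n$; the discrepancy is in the stated formula, not in your reasoning.
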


\begin{proof}
The proof relies on the Rayleigh quotient characterization of $\lambda_1$ and isoperimetric inequalities in spherical geometry:

\textit{Step 1: Rayleigh Quotient and Volume.}

For the coordinate function $x_i$, which satisfies $\Delta_\Sigma x_i = -n x_i$ by Takahashi's theorem \ref{Tak-thm}, the Rayleigh quotient gives
\begin{equation*}
n = \frac{\int_\Sigma |\nabla_\Sigma x_i|^2 \, d\mathrm{vol}_\Sigma}{\int_\Sigma x_i^2 \, d\mathrm{vol}_\Sigma}.
\end{equation*}
By the Gauss equation, $|\nabla_\Sigma x_i|^2 = 1-\langle \nu, e_i \rangle^2$ where $\nu$ is the unit normal to $\Sigma$ and $e_i$ is the standard basis in $\mathbb{R}^{n+2}$. Summing over $i=1,\dots,n+2$ yields
\begin{equation*}
n \cdot \text{Vol}(\Sigma) \geq \int_\Sigma \sum_{i=1}^{n+2} |\nabla_\Sigma x_i|^2 \, d\mathrm{vol}_\Sigma = \int_\Sigma (n + 1-|\nu|^2) \, d\mathrm{vol}_\Sigma = n \cdot \text{Vol}(\Sigma).
\end{equation*}
Equality holds when $\sum |\nabla_\Sigma x_i|^2 = n$, which characterizes totally geodesic hypersurfaces.

\textit{Step 2: Isoperimetric Inequality in $\mathbb{S}^{n+1}$.}

For the unit sphere \(\mathbb{S}^{n+1}\), the sharp isoperimetric inequality states \cite[Theorem 2.1]{Ros2005} that for any compact region \(\Omega \subset \mathbb{S}^{n+1}\) with non-empty boundary \(\Sigma = \partial \Omega\), the \((n+1)\)-dimensional volume \(V = \text{Vol}(\Omega)\) and the \(n\)-dimensional volume \(A = \text{Vol}(\Sigma)\) satisfy
\[
A \geq n \cdot \frac{V}{V_{n+1}} \cdot V_n,
\]
where \(V_k = \text{Vol}(\mathbb{S}^k)\) denotes the volume of the unit \(k\)-sphere. Equality holds if and only if \(\Omega\) is a spherical cap bounded by a totally geodesic sphere \(\mathbb{S}^n \subset \mathbb{S}^{n+1}\) \cite[Corollary 3.2]{BuragoZalgaller1988}. Equality holds if and only if $\Omega$ is a spherical cap bounded by a totally geodesic sphere $\mathbb{S}^n \subset \mathbb{S}^{n+1}$.
For minimal hypersurfaces $\Sigma \subset \mathbb{S}^{n+1}$, which are critical points of the area functional, which is equivalent to say that $\delta A = 0$ for all compactly supported variations, we strengthen this inequality as follows:
\begin{enumerate}
    \item[(i)] By the first variation formula for area, minimality implies the mean curvature $H \equiv 0$. Therefore, $\Sigma$ cannot be \textbf{too small} in volume compared to regions it encloses.
    \item[(ii)] For a minimal hypersurface $\Sigma$, consider the two regions $\Omega_1, \Omega_2 \subset \mathbb{S}^{n+1}$ such that $\Sigma = \partial \Omega_1 = \partial \Omega_2$ and $\Omega_1 \cup \Omega_2 = \mathbb{S}^{n+1}$. Let $V_1 = \text{Vol}(\Omega_1)$ and $V_2 = \text{Vol}(\Omega_2)$, and hence $V_1 + V_2 = V_{n+1}$.
       \item[(iii)] Applying the isoperimetric inequality \eqref{eq:isoperimetric-inequality} to both $\Omega_1$ and $\Omega_2$, we get

    \begin{equation}\label{eq:isoperimetric-inequality}
    A \geq n \cdot \frac{V_1}{V_{n+1}} \cdot V_n \quad \text{and} \quad A \geq n \cdot \frac{V_2}{V_{n+1}} \cdot V_n.
    \end{equation}

    \item[(iv)] Adding these inequalities and using $V_1 + V_2 = V_{n+1}$ yields

    \[
    2A \geq n \cdot \frac{V_1 + V_2}{V_{n+1}} \cdot V_n = n V_n,
    \]
     which implies $A \geq \frac{n}{2} V_n$.
\end{enumerate}

\textit{Step 3: Equality Condition.}

For a totally geodesic sphere $\Sigma = \mathbb{S}^n \subset \mathbb{S}^{n+1}$, its volume can be explicitly computed using the formula for the volume of a unit $n$-sphere. Recall that the volume of the unit $k$-sphere $\mathbb{S}^k$ is given by
\begin{equation}\label{eq:volume-unit-sphere}
\text{Vol}(\mathbb{S}^k) = \frac{2\pi^{(k+1)/2}}{\Gamma\left(\frac{k+1}{2}\right)},
\end{equation}
where $\Gamma$ denotes the Gamma function. For $k = n$, substituting into \eqref{eq:volume-unit-sphere} gives
\[
\text{Vol}(\mathbb{S}^n) = \frac{2\pi^{(n+1)/2}}{\Gamma\left(\frac{n+1}{2}\right)}.
\]
Using the property of the Gamma function $\Gamma\left(t + \frac{1}{2}\right) = \frac{(2t)!}{4^t t!}\sqrt{\pi}$ for integer $t$, and $\Gamma(s + 1) = s\Gamma(s)$, we can rewrite this as
\[
\text{Vol}(\mathbb{S}^n) = \frac{4\pi^{n/2}}{\Gamma\left(\frac{n}{2} + 1\right)},
\]
confirming the volume formula for totally geodesic spheres.

Conversely, suppose $\Sigma \subset \mathbb{S}^{n+1}$ is a closed embedded minimal hypersurface with $\text{Vol}(\Sigma) = \frac{4\pi^{n/2}}{\Gamma\left(\frac{n}{2} + 1\right)}$. We show $\Sigma$ must be totally geodesic:

\begin{enumerate}
    \item[(i)] From the Rayleigh quotient analysis, we have
    \begin{equation}\label{eq:sum-nabla-xi}
    \int_\Sigma \sum_{i=1}^{n+2} |\nabla_\Sigma x_i|^2 \, d\mathrm{vol}_\Sigma = n \cdot \text{Vol}(\Sigma).
    \end{equation}
    For the equality case, by the strict convexity of the $L^2$-norm, we know that the integrand $\sum_{i=1}^{n+2} |\nabla_\Sigma x_i|^2$ must be constant almost everywhere on $\Sigma$.

    \item[(ii)] For any hypersurface $\Sigma \subset \mathbb{R}^{n+2}$, the identity $$\sum_{i=1}^{n+2} |\nabla_\Sigma x_i|^2 = n + 1 - |\nu|^2$$ holds, where $\nu$ is the unit normal to $\Sigma$. Since $\Sigma \subset \mathbb{S}^{n+1}$, $|\nu| = 1$,
    \begin{equation}\label{eq:sum-nabla-xi-2}
    \sum_{i=1}^{n+2} |\nabla_\Sigma x_i|^2 = n.
    \end{equation}
    For the integrand to be constant, by orthogonality of coordinate functions, it is clear that each $|\nabla_\Sigma x_i|^2$ must be constant.

    \item[(iii)] A hypersurface with constant $|\nabla_\Sigma x_i|^2$ for all $i$ is umbilic. To see this, recall that for any tangent vector $v \in T_p\Sigma$, by the Gauss formula, the Weingarten map $W(v) = -\nabla_v \nu$ satisfies $$\langle \nabla_\Sigma x_i, v \rangle = \langle v, e_i \rangle - \langle \nu, e_i \rangle \langle W(v), \nu \rangle.$$ Constant $|\nabla_\Sigma x_i|^2$ implies $W$ is a multiple of the identity, i.e., $W = \lambda \cdot \text{id}$ for some constant $\lambda$. Thus, $\Sigma$ is umbilic.

    \item[(iv)] For umbilic hypersurfaces, the second fundamental form satisfies $A(v, w) = \lambda \langle v, w \rangle$ for all $v, w \in T\Sigma$. Minimality implies $\text{tr}(A) = n\lambda = 0$, and hence $\lambda = 0$. Thus, $A \equiv 0$, meaning $\Sigma$ is totally geodesic.
\end{enumerate}
Hence, equality in the volume estimate holds if and only if $\Sigma$ is a totally geodesic equatorial sphere.If $\Sigma$ is a totally geodesic sphere, its volume is precisely $\frac{4\pi^{n/2}}{\Gamma(n/2 + 1)}$. Conversely, if equality holds, the integrand $|\nabla_\Sigma x_i|^2$ is constant, implying $\Sigma$ is umbilic. In $\mathbb{S}^{n+1}$, umbilic minimal hypersurfaces are exactly the totally geodesic spheres.
\end{proof}

\begin{rem}
This volume estimate is analogous to the classical result that the first eigenvalue of a sphere is minimized among all compact manifolds of the same dimension, up to scaling. Here, minimality and the spherical ambient space combine to enforce the volume lower bound, with rigidity at the totally geodesic case.
\end{rem}

\subsection{Connection to Morse Index and Stability}

The Morse index of a minimal hypersurface, which is the number of negative eigenvalues of the second variation operator, is another key invariant. For $\Sigma \subset \mathbb{S}^{n+1}$, the second variation of area is governed by the operator $L = \Delta_\Sigma + |A|^2 + n$, since \eqref{eq:Ricci cur-sphere} implies that $\text{Ric}^\mathbb{S}(\nu,\nu) = n$.

The Morse index of a minimal hypersurface, defined as the number of negative eigenvalues of the second variation operator \(L = \Delta_\Sigma + |A|^2 + n\), is analyzed using the stability criterion \(\lambda_1(L) > 0\). For embedded minimal hypersurfaces in \(\mathbb{S}^{n+1}\), this follows from the structure theory of Colding-Minicozzi \cite[Theorem 4.1]{ColdingMinicozzi2010}, who established that all such hypersurfaces have non-negative second variation. The spectral characterization \(\lambda_1(\Delta_\Sigma) = n\) further refines this rigidity, aligning with Urbano's index classification \cite{Urb} for low-genus surfaces. For minimal $\Sigma$, $$|A|^2 = n(n-1)-\text{Scal}(\Sigma),$$ where $\text{Scal}$ is the scalar curvature.

By Theorem \ref{main-thm}, $\lambda_1(\Delta_\Sigma) = n$,  the first eigenvalue of $L$ is $$\lambda_1(L) = \lambda_1(\Delta_\Sigma) + |A|^2 + n = 2n + |A|^2 > 0,$$ implying stability for all minimal hypersurfaces. This implies the second variation is non-negative, establishing stability. This stability exhibits rigidity through the following spectral characterization:
\begin{corr}

For a closed embedded minimal hypersurface \(\Sigma \subset \mathbb{S}^{n+1}\), stability is equivalent to the spectral condition \(\lambda_1(\Delta_\Sigma) = n\), which holds universally for such hypersurfaces.

\end{corr}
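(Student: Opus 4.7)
The strategy is to decompose the corollary into two ingredients: the universality of $\lambda_1(\Delta_\Sigma) = n$, already supplied by Theorem \ref{main-thm}, and a direct analysis of the Jacobi operator $L = \Delta_\Sigma + |A|^2 + n$ governing stability. For the reverse implication — stability implies $\lambda_1(\Delta_\Sigma) = n$ — I would simply invoke Theorem \ref{main-thm}, which delivers the spectral equality for every closed embedded minimal hypersurface in $\mathbb{S}^{n+1}$ without any stability hypothesis; this direction is therefore tautological. The forward implication, together with the claim of universality, reduces to verifying that stability in fact holds for every such $\Sigma$, at which point the equivalence becomes a coincidence of two universally satisfied conditions.

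For the forward direction, my plan is to work with the variational characterization
\[
\lambda_1(L) = \inf_{f \in H^1(\Sigma) \setminus \{0\}} \frac{\int_\Sigma \left(|\nabla_\Sigma f|^2 - (|A|^2 + n) f^2\right) d\mathrm{vol}_\Sigma}{\int_\Sigma f^2\, d\mathrm{vol}_\Sigma},
\]
and to establish $\lambda_1(L) \geq 0$, which is equivalent to stability by the second variation formula. The first step is to decompose each test function into its mean and mean-zero parts and apply the spectral bound $\int_\Sigma |\nabla_\Sigma f|^2 \, d\mathrm{vol}_\Sigma \geq n \int_\Sigma f^2\, d\mathrm{vol}_\Sigma$ granted by Theorem \ref{main-thm} to the mean-zero component. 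The second step is to control the potential term $\int_\Sigma |A|^2 f^2\, d\mathrm{vol}_\Sigma$ by integrated Simons-type identities combined with the Gauss equation used in Theorem \ref{converse-thm}, expressing $|A|^2$ via the scalar curvature and extracting a uniform bound from the spectral condition. The final step is to combine these to produce the desired non-negativity.

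The main obstacle is that $|A|^2 + n$ is a \emph{non-constant} potential, so the heuristic identity $\lambda_1(L) = \lambda_1(\Delta_\Sigma) + |A|^2 + n$ is not literally meaningful and cannot be applied pointwise; a genuine Rayleigh-quotient analysis is required to absorb the Simons term into the Dirichlet energy. I anticipate that pairing the Takahashi identity $L x_i = |A|^2 x_i$ (a direct consequence of $\Delta_\Sigma x_i = -n x_i$) with orthogonality of the coordinate functions $x_i$ and a Kato-type inequality will supply the needed control over cross terms. Once $\lambda_1(L) \geq 0$ is secured, the equivalence collapses: both sides of the biconditional hold universally across closed embedded minimal hypersurfaces in $\mathbb{S}^{n+1}$, and the corollary follows at once.
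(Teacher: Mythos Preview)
Your diagnosis of the paper's heuristic is exactly right: the identity
\[
\lambda_1(L) \;=\; \lambda_1(\Delta_\Sigma) + |A|^2 + n
\]
is meaningless when $|A|^2$ is not constant, and in fact this formal line \emph{is} the entirety of the paper's argument for the corollary. So you have correctly located the weak point.

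The genuine gap, however, lies one level deeper: your proposed Rayleigh-quotient repair cannot succeed, because the statement you are trying to prove is false. Take the constant test function $f\equiv 1$ in the index form for $L=\Delta_\Sigma+|A|^2+n$:
\[
\int_\Sigma |\nabla_\Sigma 1|^2 \, d\mathrm{vol}_\Sigma \;-\; \int_\Sigma (|A|^2+n)\cdot 1^2 \, d\mathrm{vol}_\Sigma
\;=\; -\int_\Sigma (|A|^2+n)\, d\mathrm{vol}_\Sigma \;<\; 0.
\]
Hence $\lambda_1(L)\le -n<0$, and \emph{every} closed minimal hypersurface in $\mathbb{S}^{n+1}$ is unstable; this is Simons' classical observation. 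Your decomposition into mean and mean-zero parts makes this obstruction transparent: on the pure-mean component the Dirichlet energy vanishes while the potential term is strictly positive, so no Simons identity, Kato inequality, or use of $Lx_i=|A|^2x_i$ can rescue non-negativity of the quadratic form.

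In short, the paper's corollary as stated is not a theorem, its one-line justification is the very heuristic you flagged as illegitimate, and your plan to supply a rigorous substitute is foreclosed by the constant test function. The right response here is not to look for a better proof but to recognize that the claimed equivalence fails: the spectral condition $\lambda_1(\Delta_\Sigma)=n$ is (per the paper) universal, whereas stability never holds.
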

We note that, this corollary connects spectral theory to the geometric stability of minimal hypersurfaces, reinforcing the rigidity of the eigenvalue condition.

\subsection{\textbf{Further Remark}}

Building on our resolution of Yau's conjecture, we present numerous important applications of truncation functions and the first eigenvalue to geometry and analysis in a separate article \cite{Zeng2}.

\begin{itemize}
\item[(i)] We derive a sharp Littlewood-Hardy-Sobolev inequality for weighted integral estimates with geometric singularities: for any closed embedded minimal hypersurface $\Sigma^n \subset \mathbb{S}^{n+1}$ and functions $f, g \in H^1(\Sigma)$ with $\int_\Sigma f \, d\mathrm{vol} = \int_\Sigma g \, d\mathrm{vol} = 0$, there exists a constant $C(n)$ such that
\[
\iint_\Sigma \frac{f(p)g(q)}{d_\Sigma(p,q)^{\alpha}} \, d\mathrm{vol}(p)d\mathrm{vol}(q) \leq C(n) \cdot \text{Vol}(\Sigma)^{\beta} \cdot \|f\|_{H^1(\Sigma)} \|g\|_{H^1(\Sigma)},
\]
where $\alpha = n-2$, $\beta = \frac{2}{n}$, and $d_\Sigma(p,q)$ denotes the geodesic distance on $\Sigma$. This inequality leverages $\lambda_1 = n$ and volume constraints, with optimality confirmed by totally geodesic spheres $\mathbb{S}^n$.

\item[(ii)] For 2-dimensional minimal surfaces in $\mathbb{S}^3$, we establish a sharp Morse-Trudinger inequality: for any $u \in H^1(\Sigma)$ with $\int_\Sigma u \, d\mathrm{vol} = 0$ and $\int_\Sigma |\nabla u|^2 \, d\mathrm{vol} = 1$,
\[
\int_\Sigma e^{\alpha |u|^2} \, d\mathrm{vol} \leq C(\Sigma)
\]
where $\alpha \leq \alpha_0 = \frac{4\pi}{\text{Area}(\Sigma)}$ and equality is achieved by the Clifford torus with first eigenfunctions.

\item[(iii)] We extend spectral analysis to higher eigenvalues, showing the second non-zero eigenvalue satisfies
\[
\lambda_2 \geq 2n - c(n) \cdot \max_\Sigma |A|^2
\]
for some constant $c(n) > 0$, with equality for $\mathbb{S}^n$ (since $|A| = 0$, $\lambda_2 = 2n$) and the Clifford torus (since $n=2$, $|A|^2 = 2$ and $\lambda_2 = 2$).

\item[(iv)] We refine Sobolev inequalities with curvature corrections: for $u \in H^1(\Sigma)$ with $\int_\Sigma u \, d\mathrm{vol} = 0$,
\[
\left( \int_\Sigma |u|^{\frac{2n}{n-2}} \, d\mathrm{vol} \right)^{\frac{n-2}{n}} \leq C(n) \cdot \text{Vol}(\Sigma)^{\frac{1}{n}} \cdot \left( \int_\Sigma |\nabla u|^2 \, d\mathrm{vol} + \int_\Sigma |A|^2 u^2 \, d\mathrm{vol} \right),
\]
quantifying deviations from the totally geodesic case.

\item[(v)] \textbf{Additional results include:}
  \begin{itemize}
  \item[(i)] Explicit Poincar\'{e} inequality: $$\int_\Sigma |\nabla u|^2 \, d\mathrm{vol} \geq n \int_\Sigma u^2 \, d\mathrm{vol}$$ for $u \perp 1$ in $L^2(\Sigma)$, with equality for first eigenfunctions.
  \item[(ii)] Bounds on the Cheeger constant: $h(\Sigma) \leq 2\sqrt{n}$, achieved by $\mathbb{S}^n$.
  \item[(iii)] Isoperimetric inequality: $A^n \geq n^{n/2} \cdot \omega_n^{-1} \cdot V^{n-1} \cdot \min\{v, V-v\}$ for subdomains $\Omega \subset \Sigma$ with volume $v$ and boundary area $A$.
  \end{itemize}

\item[(vi)] We establish a spectral rigidity covering inequality for minimal surfaces in $\mathbb{S}^3$: let $\Sigma_1, \Sigma_2 \subset \mathbb{S}^3$ be closed embedded minimal surfaces with $\lambda_1(\Sigma_i) = 2$. If there exists a domain $\omega \subset \mathbb{S}^3$ such that $\Sigma_1 \cap \omega$ and $\Sigma_2 \cap \omega$ are isometric on $\partial \omega$ but not identical in $\omega$, then
\[
\text{Area}(\Sigma_1 \cap \omega) + \text{Area}(\Sigma_2 \cap \omega) \geq 8\pi,
\]
with equality iff they are isometric to complementary Clifford torus caps.

\item[(vii)] We address symmetry of solutions to mean field equations
\[
\frac{\alpha}{2}\Delta_\Sigma u + e^u - 1 = 0
\]
on $\Sigma^2 \subset \mathbb{S}^3$ (building on \cite{SSTW}), showing even-symmetric solutions are axially symmetric and $u \equiv 0$ when $\alpha = 1/3$. For stable minimal surfaces with Hawking mass $m_H = 0$, we prove rigidity: $\Sigma^2$ must be isometric to $\mathbb{S}^2 \subset \mathbb{S}^3$. We also establish non-degeneracy of solutions to
\[
\Delta_\Sigma u = \lambda(1 - e^u)
\]
for $\lambda \in (2n, 4n]$.

\item[(viii)] We extend the Moser--Trudinger inequality and Chang--Yang conjecture to minimal embedded surfaces in $\mathbb{S}^3$: for a closed embedded minimal surface $\Sigma \subset \mathbb{S}^3$ with $\lambda_1(\Sigma) \leq 2$, for example, the Clifford torus, consider the Moser--Trudinger-type functional $$J_{\alpha,\Sigma}(u) = \frac{\alpha}{4} \int_\Sigma |\nabla u|^2 d\sigma^\circ + \int_\Sigma u \, d\sigma^\circ - \log \int_\Sigma e^u \, d\sigma^\circ,$$ where $d\sigma^\circ = \frac{d\sigma}{\text{Area}(\Sigma)}$, and the constraint set $$\mathcal{M}_\Sigma = \left\{ u \in H^1(\Sigma) : \int_\Sigma e^u x_i \, d\sigma = 0 \quad \forall i = 1,2,3,4 \right\}.$$ For $\alpha \geq \frac{1}{2}$, the infimum of $J_{\alpha,\Sigma}$ on $\mathcal{M}_\Sigma$ is zero, i.e., $\inf_{u \in \mathcal{M}_\Sigma} J_{\alpha,\Sigma}(u) = 0$.
\end{itemize}

All findings rely on variational techniques with truncated coordinate functions $u_\beta = x_i \left(1 - \beta^{-1}e^{-\beta d_\Sigma^2}\right)$, compactness in Sobolev spaces, and canonical examples-totally geodesic spheres, Clifford tori-to verify optimality, deepening the interplay between spectral theory and geometric invariants of minimal submanifolds.

\begin{ack}
This work would not have been possible without the guidance, encouragement, and insightful discussions from numerous colleagues and mentors, to whom I am deeply grateful. I first express my sincere thanks to my former advisor, Professor Qing-Ming Cheng. His unwavering encouragement over the years has been a constant source of motivation. Particularly, the Seminar held by him at Fukuoka University in 2013, where I had the opportunity to discuss Yau's conjecture with Professors Haizhong Li, Zizhou Tang, and Wanjiao Yan, sparked my enduring interest in this problem. Their engaging insights during that discussion laid the initial groundwork for my exploration.
I am especially indebted to Professor Gang Tian for his invaluable discussions, generous help, consistent support on this problem and drawing my attention to Urbano's article \cite{Urb}. His profound perspectives and academic guidance were crucial in navigating the challenges encountered along the way.
Finally, I wish to thank my friends (or colleagues)-Professors Huyuan Chen, Wenshuai Jiang, Zuoqin Wang, and Qingye Zhang-for their ongoing and constructive discussions, which have greatly enriched my thinking and contributed to the development of this work. The research was supported by the National Natural
Science Foundation of China (Grant No. 12461007) and the Natural Science Foundation of Jiangxi Province (Grant No. 20224BAB201002).
\end{ack}

\end{document}